\newtheorem{theorem}{Theorem}[section]
\newtheorem{lemma}{Lemma}[section]
\theoremstyle{remark}
\newtheorem{remark}{Remark}[section]
\newtheorem{definition}{Definition}[section]
\newtheorem{assumption}{Assumption}[section]
\newtheorem{example}{Example}[section]
\newtheorem{corollary}{Corollary}[section]
\newtheorem{scheme}[theorem]{Scheme}
\def\div{\mathop{\mathrm{div}}\nolimits}
\def\!{\mathop{\mathrm{!}}}
\def\d{\,\mathrm{d}}
\def\R{\mathbb{ R}}
\def\P{\mathcal{P}}
\def\E{\mathcal{E}}
\def\S{\mathcal{S}}
\def\F{\mathcal{F}}
\def\H{\mathcal{H}}
\begin{document}
 \title{Wasserstein Gradient Flow Formulation of the Time-Fractional Fokker-Planck Equation\thanks{Received date, and accepted date (The correct dates will be entered by the editor).}}


          \author{Manh Hong Duong\thanks{School of Mathematics, University of Birmingham, Birmingham B15 2TT, UK (\texttt{h.duong@bham.ac.uk}).}
          \and Bangti Jin \thanks{Department of Computer Science, University College London, Gower Street, London, WC1E 6BT, UK (\texttt{b.jin@ucl.ac.uk,bangti.jin@gmail.com}).}}

         \maketitle

          \begin{abstract}
In this work, we investigate a variational formulation for a time-fractional Fokker-Planck
equation which arises in the study of complex physical systems involving anomalously slow diffusion. The model
involves a fractional-order Caputo derivative in time, and thus inherently nonlocal. The study follows
the Wasserstein gradient flow approach pioneered by \cite{JordanKinderlehrerOtto:1998}. We
propose a JKO type scheme for discretizing the model, using the L1 scheme for the Caputo fractional
derivative in time, and establish the convergence of the scheme as the time step size tends to zero.
Illustrative numerical results in one- and two-dimensional problems are also presented to show the approach.\\
\textbf{Keywords}: Wasserstein gradient flow; time-fractional Fokker-Planck equation; convergence of time-discretization scheme.
\end{abstract}

%
%
%


\section{Introduction}
In this work, we are interested in the following time-fractional Fokker-Planck equation (FPE):
\begin{equation}\label{eq:fractionalFP}
  \left\{\begin{aligned}
    \partial^\alpha_t \rho & =\div(\nabla\Psi\rho)+\Delta\rho,\quad \mbox{in } \mathbb{R}^d\\
     \rho(0)&=\rho_0,
  \end{aligned}\right.
\end{equation}
where $\rho_0$ is the initial datum, and $\Psi$ is the forcing term.
Here, the notation $\partial_t^\alpha \varphi(t)$ denotes the Caputo fractional derivative of order
$\alpha\in(0,1)$ in time, defined by \cite[p. 91]{KilbasSrivastavaTrujillo:2006}
\begin{equation*}
  \partial_t^\alpha \varphi(t) = \frac{1}{\Gamma(\alpha)}\int_0^t(t-s)^{\alpha-1}\varphi'(s)\d s
\end{equation*}
where $\Gamma(z)$ is the Gamma function defined by $\Gamma(z)=\int_0^\infty s^{z-1}e^{-s}\d s$.
The fractional derivative $\partial_t^\alpha \varphi(t)$ recovers the usual first-order derivative $\varphi'(t)$
as $\alpha\to 1^-$ for suitably smooth functions. Therefore, the model \eqref{eq:fractionalFP} can be regarded as
a time-fractional analogue of the classical FPE.

The interest in the model \eqref{eq:fractionalFP} is motivated by an explosively growing list
of practical applications involving anomalously slow diffusion processes (a.k.a. subdiffusion), which deviate from the
classical diffusive behavior. The so-called subdiffusive process displays local motion occasionally
interrupted by long sojourns and trapping effects, and it has been widely accepted to better describe
transport phenomena in a number of practical applications in physics, biology and finance, e.g., the
study of volatility of financial markets, bacterial motion and bird flight, etc. (see the review
\cite{MetzlerKlafter:2000} for an extensive list with physical modelings). Model \eqref{eq:fractionalFP}
can be viewed as the macroscopic limit of continuous time random walk with a heavy-tailed waiting time
distribution (with a divergent mean) between consecutive jumps \cite{BarkaiMetzlerKlafter:2000}, in analogy with Brownian motion for
normal diffusion. The evolution of the probability density function (PDF) associated with the subdiffusion
process is governed by a time-fractional FPE, i.e., a FPE involving a fractional derivative in time,
as given in \eqref{eq:fractionalFP}. In the literature, there are also several works using fractional Laplacian
to describe anomalously fast diffusion processes (i.e., superdiffusion), which leads to space fractional Fokker-Planck
equations; see, e.g., \cite{SanchezCesbron2019,Bowles2015,DuongLu2019} and references therein.

There have been several important studies on the model \eqref{eq:fractionalFP} from various different perspectives
\cite{BarkaiMetzlerKlafter:2000,HenryLanglandsStraka:2010,BaeumerStraka:2017,NaneNi:2016,MagdziarzGajda:2014,
AngstmannDonnellyHenry:2015,CamilliDeMaio:2017,LiLiu:2019,Kemppainen2019}. The physical modeling using time-fractional
FPE has a long history; see \cite{MetzlerKlafter:2000} for in-depth detailed discussions.
Barkai et al \cite{BarkaiMetzlerKlafter:2000} derived the model \eqref{eq:fractionalFP} from the continuous
time random walk model in order to describe anomalous diffusion in a time-independent external force field; see
\cite{HenryLanglandsStraka:2010,AngstmannDonnellyHenry:2015} for an extension to space- and
time-dependent forcing. The well-posedness of the problem was discussed in \cite{BaeumerStraka:2017},
and the stochastic representation of the solutions was studied in \cite{MagdziarzGajda:2014,NaneNi:2016}.
Le et al \cite{LeMcLeanMustapha:2016} studied the numerical solution of the time-fractional FPE using the Galerkin
finite element method.
Camilli and De Maio \cite{CamilliDeMaio:2017} established the existence and uniqueness of a time-fractional
mean field games system. Kemppainen and Zacher \cite{Kemppainen2019} investigated the long time
behavior of a general class of nonlocal-in-time FPEs via an entropy argument, which is substantially different
from that for the classical FPE. Li and Liu \cite{LiLiu:2019}
described a discretization scheme for time-fractional gradient flow. However, none of these works has
treated the gradient flow formulation for time-fractional FPE, which was recently pointed
out by Kemppainen and Zacher \cite{Kemppainen2019} as ``\textit{an analogue of the
celebrated theorem of Jordan, Kinderlehrer and Otto on the gradient flow structure of the classical FPE
in the Wasserstein space $\mathcal{P}_2(\mathbb{R}^d)$ seems to be unknown for equation \eqref{eq:fractionalFP}
and would be highly desirable.}''

The goal of this work is to discuss the time discretization of the model \eqref{eq:fractionalFP} via a
JKO type scheme, thereby filling in an important missing piece on the time-fractional FPE pointed out
by Kemppainen and Zacher \cite{Kemppainen2019}. This is carried out following the pioneering work of Jordan,
Kinderlehrer and Otto \cite{JordanKinderlehrerOtto:1998} using the Wasserstein gradient flow for the
classical FPE. Specifically, with a time step size $\tau$, the scheme reads: given the initial datum
$\rho^0$, find $\rho^n,\ n=1,2,\ldots,N$ by minimizing
\begin{equation}\label{eqn:JKO-intro}
   \frac{C_\alpha}{2\tau^\alpha}W_2^2(\rho,\overline{\rho}^{n-1}) + \F(\rho),
\end{equation}
over the Wasserstein space $\mathcal{P}_2(\mathbb{R}^d)$, where $W_2(\cdot,\cdot)$ denotes the Wasserstein distance,
$\overline{\rho}^{n-1}$ is a convex combination of of $\rho^0,\ldots,\rho^{n-1}$ (with weights depending on the
numerical approximation of the fractional derivative $\partial_t^\alpha \rho$), $C_\alpha=1/\Gamma(2-\alpha)$ is
a fixed constant and $\F(\rho)$ is the free energy; See Section \ref{sec:JKO} for details. The term $\overline{\rho}^{k-1}$
captures the nonlocal nature / memory effect of the mathematical model. The scheme recovers the classical JKO
scheme \cite{JordanKinderlehrerOtto:1998} as $\alpha\to1^-$, and thus it represents a fractional analogue of
the latter. Numerically, it has comparable computational complexity as the classical JKO scheme, except the extra
computation of the convex combination $\overline{\rho}^{n-1}$. The main result is given in Theorem \ref{thm: main theorem}, which shows
that the piecewise constant interpolation converges weakly in $L^1((0,T) \times \mathbb{R}^d)$ to a weak solution of the
model \eqref{eq:fractionalFP}. Further, we numerically illustrate the performance of the approach, using recently
developed powerful solvers for minimization problems involving Wassserstein distance based on entropy regularization
\cite{Cuturi:2013,Peyre:2015,PeyreCuturi:2019}.

The main technical challenge of the fractional extension \eqref{eqn:JKO-intro} of the classical JKO scheme is to deal with the nonlocality
of the fractional derivative $\partial_t^\alpha \rho$. Numerically, this is overcome by adopting one extremely popular
fractional analogue of the backward Euler scheme (used in the JKO scheme) from the numerical analysis community, known
as the L1 scheme \cite{LinXu:2007} for discretizing the Caputo derivative $\partial_t^\alpha\rho$, and its weights
enter into the term $\overline{\rho}^{n-1}$. Naturally, the nonlocality of the term $\overline{\rho}^{n-1}$ also requires
substantial adaptation of known techniques \cite{JordanKinderlehrerOtto:1998} for the convergence analysis. The gradient
flow formulation and its convergence analysis represent the main contributions of this work.

The rest of the paper is organized as follows. In Section \ref{sec:prelim}, we recall preliminaries
on fractional calculus and describe the connection of the model \eqref{eq:fractionalFP} with stochastic
process and related results on existence and uniqueness. Then in Section \ref{sec:L1}, we describe the L1 scheme,
which is an extension of the backward Euler method to the fractional case, and derive relevant
approximation properties, which are needed for constructing the scheme \eqref{eqn:JKO-intro} and
its convergence analysis. In Section \ref{sec:JKO},
we describe the time-fractional JKO scheme, and state the main theorem, whose lengthy and technical proof is given in
Section \ref{sec:conv}. Last, in Section \ref{sec:numer}, we present numerical results for one- and two-dimensional
problems to illustrate features of the proposed JKO scheme. Below, $C$ denotes a generic constant that depends
on the parameters of the problem, on the initial datum $\rho_0$, and may change at each occurrence, but it is
always independent of the time level $n$ and of time step size $\tau$.

\section{Preliminaries}\label{sec:prelim}

In this section we briefly recall preliminaries on fractional calculus, stochastic model for fractional FPEs
and the concept of weak solution for problem \eqref{eq:fractionalFP}.

\subsection{Preliminaries on fractional calculus}
First, we recall basic concepts from fractional calculus \cite{KilbasSrivastavaTrujillo:2006}.
Throughout, we always assume $\gamma\in [0,1)$, and $a<b$. Then for a function $f: (a,b)
\to\mathbb{R}$, the left-sided and right-sided Riemann-Liouville fractional integrals of
order $\gamma$, denoted by $_aI_t^\gamma f$ and $_tI_b^\gamma f$, are respectively defined by
\begin{align*}
  _aI_t^\gamma f(t)  = \frac{1}{\Gamma(\gamma)}\int_a^t(t-s)^{\gamma-1}f(s)\d s\quad \mbox{and}\quad
  _tI_b^\gamma f(t)  = \frac{1}{\Gamma(\gamma)}\int_t^b(s-t)^{\gamma-1}f(s)\d s.
\end{align*}
These integral operators are well defined for $f\in L^1(a,b)$ and are bounded on $L^p(a,b)$
for any $p\geq1$. The integral operators $_aI_t^\gamma$ and
$_tI_b^\gamma$ are adjoint to each other with respect to $L^2(a,b)$:
\begin{equation}\label{eqn:RL-int-by-part}
  \int_a^b ({_aI_t^\gamma f})(t)g(t)\d t = \int_a^b f(t)({_tI_b^\gamma} g)(t)\d t.
\end{equation}
This relation can be verified directly by changing the order of integration.

The left-sided and right-sided Caputo derivative of order $\alpha\in(0,1)$ of a function $f:(a,b)
\to \mathbb{R}$, denoted by $_aD_t^\alpha f$ and $_tD_b^\alpha f$, are respectively defined by
\begin{align*}
  _aD_t^\alpha f(t)  = ({_aI_t^{1-\alpha} f'})(t)\quad\mbox{and}\quad
  _tD_b^\alpha f(t)  = - ({_tI_b^{1-\alpha} f'})(t).
\end{align*}

Note that the definition of the Caputo derivative of order $\alpha$ requires the existence of a first-order
derivative. Hence, the definition is more stringent. There have been several important efforts in relaxing
the regularity requirement \cite{GorenfloLuchkoYamamoto:2015,LiLiu:2018}. It can be verified that as $\alpha\to 1^-$,
$_aD_t^\alpha f$ recovers the usual first-order derivative $\partial_t f$, when $f$ is sufficiently smooth.
Due to the nonlocality of the fractional derivatives, many useful rules in calculus are no longer
available. The following integration by parts formula is useful.

\begin{lemma}\label{lem:int-by-part}
The following identity holds for $f,g\in C^1[a,b]$ with $g(b)=0$:
\begin{equation}\label{eq: integration by parts}
\int_a^b ({_aD^\alpha_t f})(t) g(t)\,\d t=\int_a^b f(t)({_tD^\alpha_b} g)(t)\,\d t-\frac{f(a)}{\Gamma(1-\alpha)}\int_a^b(t-a)^{-\alpha}g(t)\d t.
\end{equation}
\end{lemma}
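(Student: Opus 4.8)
The plan is to reduce the claimed formula to the Riemann--Liouville adjointness relation \eqref{eqn:RL-int-by-part} combined with one ordinary integration by parts. Since $f,g\in C^1[a,b]$, we have $f'\in C[a,b]\subset L^2(a,b)$ and $g\in C[a,b]\subset L^2(a,b)$, so writing the Caputo derivative as $_aD_t^\alpha f={_aI_t^{1-\alpha}}f'$ and applying \eqref{eqn:RL-int-by-part} with $\gamma=1-\alpha$ gives
\[
\int_a^b ({_aD^\alpha_t f})(t)\,g(t)\,\d t=\int_a^b f'(t)\,({_tI_b^{1-\alpha}} g)(t)\,\d t.
\]

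Next I would integrate by parts classically on the right-hand side, which requires the derivative of $t\mapsto({_tI_b^{1-\alpha}} g)(t)$. After the change of variables $u=s-t$ in the defining integral, one differentiates under the integral sign; the boundary contribution coming from the moving endpoint $u=b-t$ is proportional to $g(b)$ and hence vanishes by hypothesis, leaving
\[
\tfrac{\d}{\d t}({_tI_b^{1-\alpha}} g)(t)=({_tI_b^{1-\alpha}} g')(t)=-({_tD^\alpha_b} g)(t).
\]
Substituting this back, the integration by parts yields
\[
\int_a^b f'(t)\,({_tI_b^{1-\alpha}} g)(t)\,\d t=\bigl[f(t)\,({_tI_b^{1-\alpha}} g)(t)\bigr]_a^b+\int_a^b f(t)\,({_tD^\alpha_b} g)(t)\,\d t.
\]
The boundary term at $t=b$ vanishes since $({_tI_b^{1-\alpha}} g)(b)=0$ (integration over an empty interval), while at $t=a$ it equals $-\tfrac{f(a)}{\Gamma(1-\alpha)}\int_a^b(s-a)^{-\alpha}g(s)\,\d s$, which is precisely the correction term in \eqref{eq: integration by parts}. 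Collecting terms gives the identity.

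The one point that genuinely needs care is the differentiation under the integral sign in the formula for $\tfrac{\d}{\d t}({_tI_b^{1-\alpha}} g)(t)$, because the kernel $u^{-\alpha}$ has a singularity at $u=0$. Since $\alpha\in(0,1)$ the kernel is integrable near the origin and $g'$ is bounded on $[a,b]$, so the Leibniz rule is justified, e.g.\ by a dominated-convergence argument applied to the difference quotients, or by first truncating the lower limit to $u=\varepsilon$ and passing to the limit $\varepsilon\to0^+$. The same integrability observation guarantees that every integral in the argument is finite, in particular $\int_a^b(t-a)^{-\alpha}g(t)\,\d t$. The hypothesis $g(b)=0$ is used exactly once, to discard the endpoint term in that differentiation; everything else is elementary, and the regularity $f,g\in C^1[a,b]$ is what legitimizes both the adjointness relation and the ordinary integration by parts. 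So the only mild obstacle is the singular-kernel differentiation; the rest is bookkeeping.
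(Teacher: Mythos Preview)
Your proof is correct and follows essentially the same route as the paper: both write $_aD_t^\alpha f={_aI_t^{1-\alpha}}f'$, invoke the adjointness relation \eqref{eqn:RL-int-by-part}, integrate by parts, and use that $g(b)=0$ forces $-({_tI_b^{1-\alpha}}g)'={_tD_b^\alpha}g$ together with $({_tI_b^{1-\alpha}}g)(b)=0$. The only difference is cosmetic: the paper cites \cite[(2.4.10)]{KilbasSrivastavaTrujillo:2006} for the derivative identity, whereas you derive it by the change of variables $u=s-t$ and justify the Leibniz rule explicitly.
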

\begin{proof}
Indeed, there holds
\begin{align*}
   \int_a^b ({_aD^\alpha_t f})(t) g(t)\,\d t & = \int_a^b ({_aI_t^{1-\alpha} f'})(t)g(t)\d t
      = \int_a^bf'(t)({_tI_b^{1-\alpha}}g)(t)\d t\\
     & = -\int_a^bf(t)({_tI_b^{1-\alpha}}g)'(t)\d t + \big[ f(t)({_tI_b^{1-\alpha}}g)(t)\big]_{a}^b.
\end{align*}
where the first identity follows from the definition of the Caputo derivative $_aD_t^\alpha f$, the second identity follows from \eqref{eqn:RL-int-by-part}, and the third identity is obtained by integration by parts. Since
$g(b)=0$, by the definition of the right-sided Caputo derivative, $-({_tI_b^{1-\alpha}}g)'(t)=
{_tD_b^\alpha}g$ \cite[(2.4.10), p. 91]{KilbasSrivastavaTrujillo:2006}. Then the desired assertion follows by
\begin{align*}
   \big[ f(t)({_tI_b^{1-\alpha}}g)(t)\big]_{a}^b &= f(b)({_tI_b^{1-\alpha}}g)(b) - f(a)({_tI_b^{1-\alpha}}g)(a)\\
    & = -\frac{f(a)}{\Gamma(1-\alpha)}\int_a^b (s-a)^{-\alpha}g(s)\d s,
\end{align*}
since $ ({_tI_b^{1-\alpha}}g)(b)=0$. This completes the proof of the lemma.
\end{proof}

Below we shall write $\partial_t^\alpha f$ and $D_t^\alpha f$ for $
_0D_T^\alpha f$ and $_tD_T^\alpha f$, respectively, for notational simplicity.

\subsection{From stochastic processes to time-fractional FPE}
It is well-known that the classical FPE
\begin{equation}\label{eqn:FPE}
\partial_t f=\div(\nabla\Psi f)+\Delta f,
\end{equation}
which corresponds to problem \eqref{eq:fractionalFP} with $\alpha=1$, is the Kolmogorov forward
equation of the following stochastic differential equation (SDE):
\begin{equation}\label{eq: SDE}
\d X(t)=-\nabla\Psi(X(t))\,\d t+\sqrt{2}\d W(t),\quad \mbox{with }X(0)=X_0,
\end{equation}
where $W(t)$ is a standard $d$-dimensional Wiener process and $X_0$ is a $d$-dimensional random
vector distributed according to the density $\rho_0$. The SDE \eqref{eq: SDE} describes the motion of a
particle undergoing diffusion in an external field $\Psi$, where $X(t)$ is the position of the
particle at time $t$, and the FPE \eqref{eqn:FPE} describes the time evolution of the
PDF of the particle. Its solution $f(t,x)$ is the PDF of finding the
particle at time $t$ and at position $x$. The time-fractional FPE
\eqref{eq:fractionalFP} can be viewed as the Kolmogorov forward equation of a stochastic
process which is obtained from \eqref{eq: SDE} under a time-changed process. Specifically, let
$U_\alpha(t)$ be the $\alpha$-stable subordinator with its Laplace transform given by $\mathbb{E}\big[e^{-kU_\alpha(\tau)}
\big]=e^{-\tau k^\alpha}$, $0<\alpha<1$, and let $S_\alpha(t)$ be the inverse $\alpha$-stable subordinator
\[
S_\alpha(t)=\inf \{\tau>0:~ U_\alpha(\tau)>t\}.
\]
Define the time-changed process
\begin{equation*}
  Y(t)=X(S_\alpha(t)).
\end{equation*}
Then the probability density function (PDF) $p(x,t)$ of $Y(t)$ satisfies
the time-fractional FPE \eqref{eq:fractionalFP}. In fact, the following theorem~\cite{MagdziarzWeronWeron:2007,
HanKobayashiUmarov2011,MagdziarzGajda:2014} describes a close connection between the solutions of
\eqref{eq:fractionalFP} and \eqref{eqn:FPE}.

\begin{theorem} Let $f(x,\tau)$ and $g(\tau,t)$ be respectively the PDFs of $X(\tau)$ and $S(t)$. The following assertions hold.
\begin{enumerate}
\item[$\rm(i)$] The PDF $p(x,t)$ of $Y(t)$ is given by $p(x,t)=\int_0^\infty f(x,\tau)g(\tau, t)\d \tau$.
\item[$\rm(ii)$] The Laplace transform of $p$ and $f$, denoted by $\hat p$ and $\hat f$, respectively, satisfy
$\hat p(x,k)=k^{\alpha-1}\hat f(x,k^\alpha)$.
\item[$\rm(iii)$] $p(x,t)$ is a weak solution to the time-fractional FPE~\eqref{eq:fractionalFP} in the sense of Definition \eqref{def: weak sol} below.
\end{enumerate}
\end{theorem}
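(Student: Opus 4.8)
The plan is to derive (i) from the independence of the time change, (ii) by taking the Laplace transform in $t$, and then (iii) by transporting the resulting Laplace identity back into the time-fractional FPE and into the weak formulation of Definition~\ref{def: weak sol}. For (i), since the $\alpha$-stable subordinator $U_\alpha$, and hence its inverse $S_\alpha$, is constructed independently of the diffusion $X$, I would condition on the value of $S_\alpha(t)$: for every bounded measurable $\phi:\mathbb{R}^d\to\mathbb{R}$,
\[
   \mathbb{E}[\phi(Y(t))]=\int_0^\infty\mathbb{E}[\phi(X(\tau))]\,g(\tau,t)\d\tau=\int_0^\infty\Bigl(\int_{\mathbb{R}^d}\phi(x)f(x,\tau)\d x\Bigr)g(\tau,t)\d\tau .
\]
Since $f,g\ge 0$, Tonelli's theorem lets one interchange the integrals and read off $p(x,t)=\int_0^\infty f(x,\tau)g(\tau,t)\d\tau$.

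For (ii), the first-passage relation $\{S_\alpha(t)\le\tau\}=\{U_\alpha(\tau)\ge t\}$ gives $\mathbb{P}(S_\alpha(t)\le\tau)=1-\int_0^t h(\tau,s)\d s$, where $h(\tau,\cdot)$ denotes the density of $U_\alpha(\tau)$, whence $g(\tau,t)=-\partial_\tau\int_0^t h(\tau,s)\d s$. Taking the Laplace transform in $t$, interchanging $\partial_\tau$ with $\int_0^\infty e^{-kt}(\cdot)\d t$, and using the hypothesis $\int_0^\infty e^{-ks}h(\tau,s)\d s=\mathbb{E}[e^{-kU_\alpha(\tau)}]=e^{-\tau k^\alpha}$, I obtain
\[
   \int_0^\infty e^{-kt}g(\tau,t)\d t=-\partial_\tau\Bigl(\tfrac1k e^{-\tau k^\alpha}\Bigr)=k^{\alpha-1}e^{-\tau k^\alpha}.
\]
Substituting this into the formula from (i) and using Fubini once more yields $\hat p(x,k)=\int_0^\infty f(x,\tau)k^{\alpha-1}e^{-\tau k^\alpha}\d\tau=k^{\alpha-1}\hat f(x,k^\alpha)$.

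For (iii), write $L\rho:=\div(\nabla\Psi\,\rho)+\Delta\rho$. Since $f$ solves the classical FPE \eqref{eqn:FPE} with $f(\cdot,0)=\rho_0$, Laplace transforming in $\tau$ gives $k\hat f(x,k)-\rho_0(x)=L\hat f(x,k)$; replacing $k$ by $k^\alpha$ and inserting (ii) in the form $\hat f(x,k^\alpha)=k^{1-\alpha}\hat p(x,k)$ produces
\[
   k^\alpha\hat p(x,k)-k^{\alpha-1}\rho_0(x)=L\hat p(x,k).
\]
Because $S_\alpha(0)=0$ we have $p(\cdot,0)=\rho_0$, so the left-hand side is exactly the Laplace transform of $\partial_t^\alpha p$; uniqueness of the Laplace transform then makes this equivalent to $\partial_t^\alpha p=Lp$, i.e.\ \eqref{eq:fractionalFP}. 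To land precisely in Definition~\ref{def: weak sol}, I would either test this identity against $\varphi\in C_c^\infty$, integrate by parts in $x$, and use Lemma~\ref{lem:int-by-part} to transfer $\partial_t^\alpha$ onto $\varphi$; or, more robustly, run the conditioning argument of (i)--(ii) directly inside the weak formulation of \eqref{eqn:FPE} satisfied by $f$, which avoids assuming that $p$ is a classical solution.

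The hard part will be making the Laplace calculus rigorous: justifying differentiation under the integral sign for $g$, the two applications of Fubini/Tonelli, and --- most of all --- the inversion step, which requires enough decay and smoothness of $f$, $g$ and $p$ (or, alternatively, a careful argument carried out entirely against test functions). One also has to check that the object produced by the subordination formula has the regularity demanded by Definition~\ref{def: weak sol} and attains $\rho_0$ in the prescribed sense; here I would rely on the smoothing properties of the heat-type semigroup generated by $L$ together with the integrability of $g(\tau,t)$ as $\tau\to0^+$.
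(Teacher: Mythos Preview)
The paper does not actually prove this theorem. It is stated as background material in Section~\ref{sec:prelim} and attributed to the literature, with the sentence immediately following it reading ``See the works \cite{MagdziarzWeronWeron:2007,HanKobayashiUmarov2011,MagdziarzGajda:2014} for further details on the stochastic representation of problem~\eqref{eq:fractionalFP}.'' So there is no in-paper proof to compare against.

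That said, your outline is essentially the standard argument one finds in those references: independence of the time change gives (i) by conditioning; the first-passage identity $\{S_\alpha(t)\le\tau\}=\{U_\alpha(\tau)\ge t\}$ together with $\mathbb{E}[e^{-kU_\alpha(\tau)}]=e^{-\tau k^\alpha}$ gives the Laplace transform $\widehat g(\tau,k)=k^{\alpha-1}e^{-\tau k^\alpha}$ and hence (ii); and transporting the Laplace-transformed classical FPE through (ii) yields the Laplace image of the Caputo equation, which is (iii). Your self-identified ``hard parts'' --- justifying differentiation under the integral, the Fubini steps, Laplace inversion, and verifying that the subordinated density actually has the regularity and initial-data attainment required by Definition~\ref{def: weak sol} --- are precisely where a rigorous proof needs work, and the cited papers address them with varying levels of completeness. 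Your alternative suggestion of running the conditioning argument directly inside the weak formulation of \eqref{eqn:FPE} (rather than inverting the Laplace transform pointwise) is the cleaner route if you want to avoid assuming classical regularity of $p$.
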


See the works \cite{MagdziarzWeronWeron:2007,HanKobayashiUmarov2011,MagdziarzGajda:2014} for further details
on the stochastic representation of problem \eqref{eq:fractionalFP}.

\begin{remark}
There are alternative equivalent reformulations of problem \eqref{eq:fractionalFP}. One popular alternative reads
\begin{equation}
\label{eq: equivalent form}
  \partial_t\rho = {^R\partial_t^{1-\alpha}}(\nabla\cdot(\rho\nabla\Psi) +\Delta \rho),
\end{equation}
where the $^R\partial_t^{1-\alpha}\varphi$ denotes Riemann-Liouville fractional derivative
of order $1-\alpha$, i.e., $^R\partial_t^{1-\alpha}\varphi(t)=\frac{\d}{\d t} ({_0I_t^{\alpha}} \varphi)(t)$.
Formally, it can be obtained from \eqref{eq:fractionalFP} by
applying $^R\partial_t^{1-\alpha}$ to both sides of \eqref{eq:fractionalFP} as
\begin{align*}
  ^R\partial_t^{1-\alpha} {\partial_t^\alpha} \varphi(t)  = \frac{\d}{\d t} {_0I_t^{\alpha}} {_0I_t^{1-\alpha}} \varphi'(t)
    = \frac{\d}{\d t}{_0I_t} \varphi'(t) = \varphi'(t),
\end{align*}
where the first identity is due to the definitions of the fractional derivatives and the second identity is
due to the semigroup property of Riemann-Liouville fractional integral.
Further, one may change the order the spatial and temporal derivative when the forcing $\Psi$
is time-independent. We refer to the work \cite{HenryLanglandsStraka:2010} for discussions on the proper formulation for
a time-dependent forcing. In the present work, we focus on the formulation \eqref{eq:fractionalFP}, and
leave the study of other time-fractional FPE models to future works.

\end{remark}

\subsection{Well-posedness}
Throughout, we only consider probability measures on $\R^d$ that are absolutely continuous with respect to
Lebesgue measure, and often identify a probability measure with its density, as the classical
setting \cite{JordanKinderlehrerOtto:1998}. We denote by $\P_2(\R^d)$
the set of all probability measures on $\R^d$ with a finite second moment, i.e.,
\begin{equation*}
\P_2(\R^d):=\Big\{\rho:\R^d\rightarrow [0,\infty)~\text{measurable},~ \int_{\R^d}\rho(x)\,\d x=1,~ M_2(\rho)<\infty\Big\},
\end{equation*}
where the second moment $M_2(\rho)$ is defined by
\begin{equation}
M_2(\rho)=\int_{\R^d}|x|^2\rho(x)\,\d x.
\end{equation}
Now, we introduce a notion of weak solutions to problem \eqref{eq:fractionalFP}.
Similar to the classical setting, we multiply equation~\eqref{eq:fractionalFP} by a smooth test
function and using the integration by parts formula \eqref{eq: integration by parts} in Lemma
\ref{lem:int-by-part}, which leads to the following notion of weak solution. Below we shall
write a function $f(t,x)$ as a vector valued function $f(t)$.
\begin{definition}
\label{def: weak sol}
A function $\rho\in L^1({\R^+\times\R^d})$ is called a weak solution of problem
\eqref{eq:fractionalFP} with initial datum $\rho_0\in \P_2(\R^{d})$ if it satisfies
that for any $\varphi\in C^\infty([0,T]\times
\mathbb{R}^d)$ with $\varphi(T)=0$, there holds
\begin{align}
\label{eq: weak formulation}
\int_0^T\int_{\R^d}\Big({_tD_T^\alpha}\varphi(t)+\nabla\Psi\cdot&\nabla\varphi(t)-\Delta\varphi(t)\Big)\rho(t)\d x\,\d t\\
   &=\frac{1}{\Gamma(1-\alpha)}\int_{\R^d}\int_0^T t^{-\alpha}\varphi(t)\d t\rho_0\,\d x.\nonumber
\end{align}
\end{definition}

Note that the formulation \eqref{eq: weak formulation} of the weak solution involves a
nonlocal term $\int_{\R^d}\leftidx{_t} I^{1-\alpha}_T\varphi(0)\rho_0\,\d x$. This term appears due
to the nonlocality of the Caputo derivative $\partial_t^\alpha\rho$, cf. Lemma \ref{lem:int-by-part}. In
the limit $\alpha\to 1^-$, it recovers the usual $\int_{\R^d}\varphi(0)\rho_0\,\d x$, in view of the
identity $\lim_{\alpha\to0^+}{_0I_t^{\alpha-1}}\varphi(0)=\varphi(0)$, under suitable regularity assumptions.
We are not are aware of any existing work directly investigating the existence and regularity of
the solutions on problem \eqref{eq:fractionalFP}. However, the existence and uniqueness of the weak solution
of an equivalent formulation given in \eqref{eq: equivalent form}  of problem \eqref{eq:fractionalFP}
were already proven in \cite[Theorem 3.3]{CamilliDeMaio:2017}. See also \cite{Zacher:2009,Akagi:2019} for
discussion on the well-posedness (existence and uniqueness) of abstract Volterra type evolution equations in
a Hilbert space setting. It is also worth noting that the proper interpretation of the initial condition
requires some care; see the works \cite{GorenfloLuchkoYamamoto:2015,LiLiu:2018} for in-depth discussions.
We leave a detailed study on these important analytic issues (possibly in more general settings of metric spaces and
spaces of probability measures) to future works.

\section{Numerical approximation of Caputo derivative}\label{sec:L1}

Now we recall the numerical approximation of the Caputo derivative $\partial_t^\alpha \varphi(t)$. There are
several different ways to construct a ``fractional'' analogue of the classical backward Euler method (see \cite{JinLazarovZhou:2019}
for a concise overview), on which the classical JKO scheme \cite{JordanKinderlehrerOtto:1998} is based.
We shall employ the so-called piecewise linear approximation, commonly known as the L1 approximation (due to
Lin and Xu \cite{LinXu:2007}) in the numerical analysis literature.

Consider a uniform partition of the time interval $[0,T]$, with a time step size $\tau= \frac{T}{N}$ and
the grid $t_n=n \tau$, $n=0,1,\ldots,N$. For any function $\varphi\in C[0,T]$, we use the shorthand notation
$\varphi^n=\varphi(t_n)$. Further, we denote $C_\alpha =\Gamma(2-\alpha)^{-1}$.
Then the L1 approximation \cite{LinXu:2007} is constructed as follows. First we split the interval $[0,t_{n}]$ into $n$
subintervals
\begin{equation*}
  \partial^\alpha_t\varphi^{n}=\frac{1}{\Gamma(1-\alpha)}\sum_{i=1}^{n}\int_{t_{i-1}}^{t_i}(t_{n}-s)^{-\alpha}\varphi'(s)\,\d s,
\end{equation*}
and then by approximating $\varphi$ by its linear interpolation over the subinterval $[t_{i-1},t_{i}]$, i.e.,
\begin{equation*}
   \varphi(t)\approx\frac{t_{i}-t}{\tau}\varphi^{i-1}+\frac{t-t_{i-1}}{\tau}\varphi^{i},\quad t\in[t_{i-1},t_{i}], i=1,\ldots, N,
\end{equation*}
or equivalently $\varphi'(t)\approx (\varphi^i-\varphi^{i-1})/\tau$ for $t\in [t_{i-1},t_i]$, we obtain the following
approximation to the Caputo derivative $\partial_t^\alpha \varphi$ at time $t=t_{n}$ by
\begin{align*}
\partial^\alpha_t\varphi^{n}=\frac{1}{\Gamma(1-\alpha)}\sum_{i=1}^n\int_{t_{i-1}}^{t_{i}}(t_{n}-s)^{-\alpha}\frac{\varphi^{i}-\varphi^{i-1}}{\tau}\,\d s + r_\tau^{n},
\end{align*}
where $ r_\tau^{n}$ is the local truncation error. It can be verified that $r_\tau^n$ takes the following
form \cite{LinXu:2007}
\begin{equation*}
  r_\tau^{n}\leq c_\varphi \left[\frac{1}{\Gamma(1-\alpha)}\sum_{i=1}^n\int_{t_{i-1}}^{t_i}\frac{t_{i}+t_{i-1}-2s}{(t_{n}-s)^\alpha}\d s + O(\tau^2)\right],
\end{equation*}
with the constant $c_\varphi$ depending only on $\|\varphi\|_{C^2[0,T]}$.
Now the elementary integral
\begin{equation*}
  \int_{t_{i-1}}^{t_{i}} (t_{n}-s)^{-\alpha} \d s = (1-\alpha)^{-1}\tau^{1-\alpha}((n+1-i)^{1-\alpha}-(n-i)^{1-\alpha})
\end{equation*}
and simple algebraic manipulations (with $C_\alpha=1/\Gamma(2-\alpha)$) lead to
\begin{align}
  \partial^\alpha_t\varphi^{n}& \approx C_\alpha \tau^{-\alpha} \sum_{i=1}^n(\varphi^{i}-\varphi^{i-1})((n+1-i)^{1-\alpha}-(n-i)^{1-\alpha})\nonumber\\
    &  = C_\alpha\tau^{-\alpha}\sum_{i=0}^{n} b_{n-i}^{(n)}\varphi^i:=\bar\partial_\tau \varphi^{n},\label{eqn:L1}
\end{align}
where the quadrature weights $b_i^{(n)}$ are given by
\begin{equation}\label{eqn:bi}
  b_i^{(n)} = \left\{\begin{array}{ll}
    1, & i=0,\\
    (i+1)^{1-\alpha}+(i-1)^{1-\alpha}-2i^{1-\alpha},   & i=1,\ldots,n-1,\\
    (n-1)^{1-\alpha}-n^{1-\alpha},   & i = n.
  \end{array}\right.
\end{equation}
Note that the last weight $b_{n}^{(n)}$ depends on $n$ differently than the preceding ones. In the special case
$\alpha=1$, the approximation reduces to the classical backward Euler method, since $b_0^{(n)}=1$ and $b_1^{(n)}=-1$,
and $b_i^{(n)}=0,$ for any $1<i\leq n$. In a similar manner, the
L1 approximation $\overline{D}_\tau^\alpha \varphi^n$ to the right-sided Caputo fractional derivative
$_tD_T^\alpha \varphi(t)$ at $t=t_n$ is given by
\begin{equation}\label{eqn:L1-right}
  \begin{aligned}
    \overline{D}_\tau^\alpha \varphi^n & = C_\alpha\tau^{-\alpha} \sum_{j=0}^{N-n} b_{j}^{(N-n)}\varphi^{n+j}
        = C_\alpha \tau^{-\alpha}\sum_{j=n}^Nb_{j-n}^{(N-n)}\varphi^j.
  \end{aligned}
\end{equation}
This approximation can be obtained by a simple change of variables.

By construction, the L1 approximations $\bar\partial_\tau^\alpha \varphi^n$ and $\overline{D}_\tau^\alpha\varphi^n$
are essentially a weighted piecewise linear approximation, with respect to the weakly singular weight $t^{-\alpha}$.
The discrete approximations are of convolution form, similar to the continuous fractional derivatives $\partial_t^\alpha
\varphi$ and $D_t^\alpha\varphi$. The L1 approximation has been widely employed for solving time-fractional diffusion,
due to its excellent empirical performance; see \cite{JinLazarovZhou:2016ima,JinZhou:2017,JinLiZhou:2018sinum} for some relevant works on
error analysis.

We will need the following auxiliary lemma.
\begin{lemma}
\label{lem: properties of b}
For $0<\alpha<1$ and a fixed $n\in\mathbb{N}$, for the weights  $b_j^{(n)}$ given in \eqref{eqn:bi},
then there holds $b_i^{(n)}< 0$ for $i=1,\ldots, n$ and~~$\sum_{i=0}^nb_i^{(n)}=0$. Further,
\begin{align*}
  \sum_{n=1}^k(-b_{n}^{(n)}) & = k^{1-\alpha}\quad \mbox{and}\quad
  \sum_{j=1}^{k-i}(-b_{j}^{(j+i)}) = 1 + (k-i)^{1-\alpha}-(k-i+1)^{1-\alpha}.
\end{align*}
\end{lemma}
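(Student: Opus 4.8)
The plan is to verify each of the four claims directly from the explicit formula \eqref{eqn:bi} for the weights, exploiting the fact that the relevant sums telescope.

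\textbf{Sign of the interior weights.} First I would treat the middle case $1\le i\le n-1$, where $b_i^{(n)} = (i+1)^{1-\alpha}+(i-1)^{1-\alpha}-2i^{1-\alpha}$. Writing $\phi(x)=x^{1-\alpha}$, this is the second central difference $\phi(i+1)-2\phi(i)+\phi(i-1)$. Since $0<\alpha<1$ we have $0<1-\alpha<1$, so $\phi$ is strictly concave on $[0,\infty)$; hence the second difference is strictly negative, giving $b_i^{(n)}<0$ for $1\le i\le n-1$. For the endpoint $i=n$ we have $b_n^{(n)}=(n-1)^{1-\alpha}-n^{1-\alpha}<0$ because $\phi$ is strictly increasing. (The case $n=1$ is the degenerate one where only $b_0^{(1)}=1$ and $b_1^{(1)}=-1$ occur, consistent with the formula.)

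\textbf{The zero-sum identity.} Next I would compute $\sum_{i=0}^n b_i^{(n)}$. Grouping $b_0^{(n)}=1$ with the telescoping middle block: $\sum_{i=1}^{n-1}\big[(i+1)^{1-\alpha}+(i-1)^{1-\alpha}-2i^{1-\alpha}\big]$ telescopes (it is a sum of consecutive first-difference increments) to $\big[n^{1-\alpha}-(n-1)^{1-\alpha}\big]-\big[1^{1-\alpha}-0^{1-\alpha}\big] = n^{1-\alpha}-(n-1)^{1-\alpha}-1$. Adding $b_0^{(n)}=1$ and $b_n^{(n)}=(n-1)^{1-\alpha}-n^{1-\alpha}$ yields $0$. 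Alternatively, and perhaps cleaner to present, one observes that $\sum_{i=0}^n b_i^{(n)}$ is exactly $\bar\partial_\tau^\alpha$ applied (up to the factor $C_\alpha\tau^{-\alpha}$) to the constant sequence $\varphi^i\equiv 1$, which must vanish since the Caputo derivative of a constant is zero; but the direct telescoping is self-contained.

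\textbf{The two remaining sums.} For $\sum_{n=1}^k(-b_n^{(n)})$ I would substitute $-b_n^{(n)}=n^{1-\alpha}-(n-1)^{1-\alpha}$ and telescope directly to $k^{1-\alpha}-0^{1-\alpha}=k^{1-\alpha}$. For the last identity, $\sum_{j=1}^{k-i}(-b_j^{(j+i)})$, I would split off the top term $j=k-i$, where the superscript equals the subscript so $-b_{k-i}^{(k)}=(k-i)^{1-\alpha}-(k-i+1)^{1-\alpha}$ — wait, more carefully: when $j=k-i$ the pair is $(j,j+i)=(k-i,k)$, and since here the subscript $j=k-i$ is strictly less than $n=k$ (for $i\ge1$) it is an \emph{interior} weight, $-b_{k-i}^{(k)} = 2(k-i)^{1-\alpha}-(k-i+1)^{1-\alpha}-(k-i-1)^{1-\alpha}$. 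The remaining terms $j=1,\dots,k-i-1$ have $-b_j^{(j+i)}$ with $j<j+i$, again interior, equal to $2j^{1-\alpha}-(j+1)^{1-\alpha}-(j-1)^{1-\alpha}$. So the whole sum is $-\sum_{j=1}^{k-i}\big[(j+1)^{1-\alpha}-2j^{1-\alpha}+(j-1)^{1-\alpha}\big]$, a telescoping sum of first differences equal to $-\big\{\big[(k-i+1)^{1-\alpha}-(k-i)^{1-\alpha}\big]-\big[1^{1-\alpha}-0^{1-\alpha}\big]\big\} = 1+(k-i)^{1-\alpha}-(k-i+1)^{1-\alpha}$, as claimed.

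\textbf{Main obstacle.} None of the steps is deep; the only point requiring care is bookkeeping — keeping straight which weights are ``interior'' ($b_i^{(n)}$ with $i<n$, given by the three-term formula) versus ``endpoint'' ($b_n^{(n)}$, the two-term formula), since in the last sum the index $j$ ranges over values where $j<j+i$ so all terms use the interior formula even though the superscript varies with $j$. The rest is strict concavity/monotonicity of $x\mapsto x^{1-\alpha}$ and telescoping, so I would organize the write-up around the single observation that every sum here collapses once the three-term weight is recognized as a second difference of $x^{1-\alpha}$.
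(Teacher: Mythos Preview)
Your proposal is correct and follows essentially the same approach as the paper: strict concavity of $x\mapsto x^{1-\alpha}$ (the paper phrases it via Jensen's inequality, you via the second central difference) for the sign, and direct telescoping for the three identities. If anything, you are slightly more careful than the paper in separating the endpoint case $i=n$ (handled by monotonicity) from the interior case, and in checking that all terms in the last sum use the interior three-term formula since $j<j+i$; the paper leaves both points implicit.
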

\begin{proof}
The first assertion is well known (see, e.g., \cite[eq. (3.7)]{LinXu:2007}), and we only give a proof
for completeness. Consider the function $f(x)=x^{1-\alpha}$ for $x>0$. Since $0<\alpha<1$, we have
$f''(x)=-\alpha(1-\alpha)x^{-\alpha-1}<0$, and hence $f$ is strictly concave on $(0,\infty)$. By Jensen's inequality we have
\begin{align*}
i^{1-\alpha}&=f(i)=f\big(\tfrac{i+1 + i-1}{2}\big)\\
&> \tfrac12f(i+1)+\tfrac12f(i-1)\\
 &=\tfrac{1}{2}(i+1)^{1-\alpha}+\tfrac12(i-1)^{1-\alpha},
\end{align*}
which immediately implies that $b_i^{(n)}< 0$ for all $i=1,\ldots, n$. Further, straight computations give
\begin{align*}
\sum_{i=0}^{n}b_i^{(n)}=1+\sum_{i=1}^{n-1} \big((i+1)^{1-\alpha}+(i-1)^{1-\alpha}-2i^{1-\alpha}\big) + ((n-1)^{1-\alpha}-n^{1-\alpha})=0.
\end{align*}
This shows the second assertion. The rest follows from straightforward computation as:
\begin{align*}
  \sum_{n=1}^k(-b_{n}^{(n)}) & = \sum_{n=1}^k (n^{1-\alpha}-(n-1)^{1-\alpha})= k^{1-\alpha},\\
  \sum_{j=1}^{k-i}(-b_{j}^{(j+i)}) & =  -\sum_{j=1}^{k-i}\big((j+1)^{1-\alpha}+(j-1)^{1-\alpha}-2j^{1-\alpha}\big)\\
                                & = -\sum_{j=1}^{k-i}\big(((j+1)^{1-\alpha}-j^{1-\alpha})-(j^{1-\alpha}-(j-1)^{1-\alpha})\big)\\
                                & = 1 + (k-i)^{1-\alpha}-(k-i+1)^{1-\alpha}.
\end{align*}
This completes the proof of the lemma.
\end{proof}

We will also need the following useful inequality of Gronwall type \cite[Lemma 2.2]{LiaoLiZhang:2018}.
\begin{lemma}\label{lem:Gronwall}
Suppose $\{\phi^n\}_{n=0}^N$ are nonnegative, and satisfy the following inequality (with $\tau=T/N$)
\begin{equation*}
  \bar\partial_\tau^\alpha \phi^n \leq C_1 + C_2\phi^n,
\end{equation*}
where $C_1,C_2$ are positive constants. Then there holds
\begin{equation*}
  \phi^n \leq 2E_{\alpha}(2C_2t_n^\alpha)\Big(\phi^0+\frac{C_1}{\Gamma(1+\alpha)}t_n^\alpha\Big),\quad \forall n=1,\ldots, N
\end{equation*}
where $E_\alpha$ denotes the Mittag-Leffler function $E_\alpha(z)=\sum_{k=0}^\infty\frac{z^k}{\Gamma(k\alpha+1)}$.
\end{lemma}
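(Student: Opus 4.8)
The plan is to follow the discrete fractional Grönwall argument of \cite{LiaoLiZhang:2018}, specialized to the L1 weights: since those weights satisfy the structural hypotheses of that work, it suffices to verify them and assemble the pieces. First I would rewrite the L1 operator in incremental (delta) form. Regrouping \eqref{eqn:L1}–\eqref{eqn:bi},
\[
  \bar\partial_\tau^\alpha\phi^n=\sum_{i=1}^n a_{n-i}\,(\phi^i-\phi^{i-1}),\qquad a_j:=C_\alpha\tau^{-\alpha}\big((j+1)^{1-\alpha}-j^{1-\alpha}\big).
\]
By the strict concavity of $x\mapsto x^{1-\alpha}$ exploited in Lemma \ref{lem: properties of b}, the kernel is positive and strictly decreasing, $a_0>a_1>\cdots>0$, with $a_0=C_\alpha\tau^{-\alpha}$; moreover $a_{n-i}$ coincides exactly with the averaged continuous kernel $\tau^{-1}\int_{t_{i-1}}^{t_i}\omega_{1-\alpha}(t_n-s)\,\d s$, where $\omega_{1-\alpha}(t)=t^{-\alpha}/\Gamma(1-\alpha)$, and this identification is what permits a comparison between the discrete and continuous fractional integrals.

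Next I would introduce the discrete complementary convolution kernels $\{P_j\}_{j\ge0}$, defined recursively by $\sum_{m=0}^{l}P_m\,a_{l-m}=1$ for all $l\ge0$, so that $P_0=a_0^{-1}=\Gamma(2-\alpha)\tau^\alpha$. Two facts are needed: (i) $P_j\ge0$ for all $j$, which is an immediate induction from the positivity and monotonicity of $\{a_j\}$; and (ii) the summation bound $\sum_{j=0}^{n-1}P_j\le t_n^\alpha/\Gamma(1+\alpha)$. For (ii), set $\psi^n:=\sum_{j=0}^{n-1}P_j$ with $\psi^0=0$; the defining identity gives $\bar\partial_\tau^\alpha\psi^n=1$, i.e.\ $\psi^n$ is the L1 solution of the discrete fractional ODE with constant right-hand side, whose continuous counterpart is $t^\alpha/\Gamma(1+\alpha)$. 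The estimate $\psi^n\le t_n^\alpha/\Gamma(1+\alpha)$ then follows by comparing the L1 quadrature of $t\mapsto t^\alpha/\Gamma(1+\alpha)$ against its exact Caputo derivative (which equals $1$), again using the positivity and monotonicity of $\{a_j\}$ together with the identification of $a_{n-i}$ with an integral of $\omega_{1-\alpha}$.

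Finally I would convolve the hypothesis against $P$. Multiplying $\bar\partial_\tau^\alpha\phi^j\le C_1+C_2\phi^j$ by $P_{n-j}\ge0$, summing over $j=1,\dots,n$, and interchanging the order of summation by means of the defining identity of $P$ yields the exact discrete ``integration'' $\phi^n-\phi^0=\sum_{j=1}^n P_{n-j}\,\bar\partial_\tau^\alpha\phi^j$, whence, invoking (i)--(ii),
\[
  \phi^n\le\phi^0+\frac{C_1}{\Gamma(1+\alpha)}t_n^\alpha+C_2\sum_{j=1}^n P_{n-j}\,\phi^j .
\]
Splitting off the $j=n$ term, equal to $C_2P_0\phi^n=C_2\Gamma(2-\alpha)\tau^\alpha\phi^n$, and absorbing it into the left-hand side (legitimate for the step sizes of interest) leaves an inequality of the same form with an extra factor $2$ and the sum running only over $j\le n-1$. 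Iterating this discrete renewal inequality and using that the $k$-fold discrete convolution of $P$ sums to at most $t_n^{k\alpha}/\Gamma(1+k\alpha)$, then summing the resulting series, produces precisely $\phi^n\le 2E_\alpha(2C_2t_n^\alpha)\big(\phi^0+\frac{C_1}{\Gamma(1+\alpha)}t_n^\alpha\big)$.

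The main obstacle is step (ii): the sharp summation bound $\sum_{j=0}^{n-1}P_j\le t_n^\alpha/\Gamma(1+\alpha)$ for the complementary kernels. The delta-form rewriting, the positivity of $P$, and the final iteration are essentially bookkeeping, but (ii) is where the precise arithmetic of the L1 weights — positivity, monotone decrease, and exact agreement with the averaged kernel $\omega_{1-\alpha}$ — must genuinely be used; it is also the point where \cite{LiaoLiZhang:2018} carries out the real work, so in practice the proof reduces to checking that our $\{a_j\}$ meet their hypotheses and then quoting their result.
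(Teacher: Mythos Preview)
The paper does not supply its own proof of this lemma; it simply quotes the result as \cite[Lemma 2.2]{LiaoLiZhang:2018}. Your sketch is a faithful outline of precisely that complementary-kernel argument from the cited reference (incremental form of the L1 operator, construction and positivity of the discrete complementary kernels $\{P_j\}$, the sharp summation bound $\sum P_j\le t_n^\alpha/\Gamma(1+\alpha)$, and the final iteration), so your approach is entirely consistent with what the paper does---indeed you have unpacked more than the paper itself provides.
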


The next result gives a ``semi-discrete'' version of the integration by parts formula in
Lemma \ref{lem:int-by-part} for the L1 approximation $\bar\partial_\tau^\alpha\varphi^n$.
\begin{lemma}\label{lem:int-by-part-semi}
Let $\{\varphi^n\}_{n=0}^N$ be a given sequence, and $\phi(t)\in C^1[0,T]$ with $\phi(T)=0$. The piecewise constant approximation
$\varphi_\tau(t)$ is defined by $\varphi_\tau(t)=\phi^n$ for $(n-1)\tau <t\leq n\tau$,
with $\phi_\tau(0)=\phi^0$. Then the following identity holds
\begin{align*}
    \int_{0}^{T}(\bar \partial_\tau^\alpha \varphi^n)(t) \phi(t)\d t
  =& \int_0^T \varphi_\tau(t) {\overline{D}_\tau^\alpha \phi(t)}\d t  + C_\alpha\tau^{-\alpha}\varphi^0\sum_{n=1}^{N}b_n^{(n)}\int_{t_{n-1}}^{t_n} \phi(t)\d t,
\end{align*}
where the function ${\overline{D}_\tau^\alpha}\phi(t)$ is defined by {\rm(}with zero extension on $\phi${\rm)}
\begin{equation*}
  {\overline{D}_\tau^\alpha}\phi(t) = \sum_{i=n}^{N}b_{i-n}^{(N-n)}\phi(t+(i-n)\tau), \quad \forall t\in (t_{n-1},t_n], \ \ n=1,\ldots, N.
\end{equation*}
\end{lemma}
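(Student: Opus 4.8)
The plan is to establish the identity by a direct computation that mirrors the proof of Lemma~\ref{lem:int-by-part}, replacing the continuous fractional derivative by the discrete L1 operator $\bar\partial_\tau^\alpha$ and the continuous integration-by-parts step with a discrete summation-by-parts (Abel summation). First I would write out the left-hand side using the defining formula \eqref{eqn:L1}, namely $\bar\partial_\tau^\alpha\varphi^n = C_\alpha\tau^{-\alpha}\sum_{i=0}^n b_{n-i}^{(n)}\varphi^i$, and integrate it against $\phi$ over each subinterval $(t_{n-1},t_n]$, so that
\begin{equation*}
  \int_0^T (\bar\partial_\tau^\alpha\varphi^n)(t)\,\phi(t)\,\d t
    = C_\alpha\tau^{-\alpha}\sum_{n=1}^N\Big(\sum_{i=0}^n b_{n-i}^{(n)}\varphi^i\Big)\int_{t_{n-1}}^{t_n}\phi(t)\,\d t.
\end{equation*}
Here I am implicitly treating $(\bar\partial_\tau^\alpha\varphi^n)(t)$ as the piecewise-constant-in-$t$ function taking the value $\bar\partial_\tau^\alpha\varphi^n$ on $(t_{n-1},t_n]$, consistent with the piecewise-constant convention used for $\varphi_\tau$; I would state this convention explicitly at the outset.

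Next I would interchange the order of the double sum, grouping terms by the index $i$ of $\varphi^i$. For a fixed $i\in\{0,1,\dots,N\}$, the coefficient of $\varphi^i$ collects all pairs $(n,i)$ with $n\ge i$ (and $n\ge 1$), giving $C_\alpha\tau^{-\alpha}\varphi^i\sum_{n=\max(i,1)}^N b_{n-i}^{(n)}\int_{t_{n-1}}^{t_n}\phi(t)\,\d t$. Re-indexing the inner sum via $j=n-i$ turns $b_{n-i}^{(n)}$ into $b_j^{(j+i)}$ and the integral into $\int_{t_{i+j-1}}^{t_{i+j}}\phi = \int_{t_{i-1}+j\tau}^{t_i+j\tau}\phi(t)\,\d t = \int_{t_{i-1}}^{t_i}\phi(t+j\tau)\,\d t$ after a shift of the integration variable. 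For $1\le i\le N$ this exactly reproduces $\int_{t_{i-1}}^{t_i}\varphi_\tau(t)\,\overline{D}_\tau^\alpha\phi(t)\,\d t$ by the stated definition of $\overline{D}_\tau^\alpha\phi$ (with the zero extension accounting for the truncation of the upper limit), and summing over $i=1,\dots,N$ yields the first term on the right-hand side. The term $i=0$ is special: it contributes $C_\alpha\tau^{-\alpha}\varphi^0\sum_{n=1}^N b_n^{(n)}\int_{t_{n-1}}^{t_n}\phi(t)\,\d t$, which is precisely the boundary term in the claimed identity --- the discrete analogue of the $-f(a)\Gamma(1-\alpha)^{-1}\int_a^b(t-a)^{-\alpha}g$ term in Lemma~\ref{lem:int-by-part}.

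The main obstacle I anticipate is bookkeeping rather than any deep analytic difficulty: one must be careful that the weights $b_{n-i}^{(n)}$ carry the superscript $n$ (not a fixed value), so after the swap and reindexing the "new" weights are $b_j^{(j+i)}$, matching exactly the expression appearing in $\overline{D}_\tau^\alpha\phi$ and in the auxiliary sums of Lemma~\ref{lem: properties of b}; and that the last weight $b_{n}^{(n)}$ behaves differently (it equals $(n-1)^{1-\alpha}-n^{1-\alpha}$), which is why the $i=0$ term cannot be absorbed into the generic pattern and instead produces the separate boundary contribution. The hypothesis $\phi(T)=0$ together with the zero extension of $\phi$ beyond $t_N$ ensures the truncation in $\overline{D}_\tau^\alpha\phi$ (summing only up to $i=N$) is consistent and no end-correction is needed there, playing the role that $g(b)=0$ plays in the continuous lemma. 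Once the index manipulations are laid out cleanly, the identity drops out with no estimates required; I would present the swap-and-reindex as the single displayed computation and then read off the two terms.
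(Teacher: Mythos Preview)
Your approach---expand via the L1 formula \eqref{eqn:L1}, swap the double sum, separate the $i=0$ contribution as the boundary term---is exactly the route the paper takes. However, there is a genuine gap in your identification of the weights. After the swap and the re-index $j=n-i$ you obtain weights $b_j^{(j+i)}$ and then assert that these ``match exactly'' the weights in the stated definition of $\overline{D}_\tau^\alpha\phi$. They do not: for $t\in(t_{i-1},t_i]$ that definition reads $\overline{D}_\tau^\alpha\phi(t)=\sum_{j=0}^{N-i}b_j^{(N-i)}\phi(t+j\tau)$, with the \emph{fixed} superscript $N-i$, not the varying superscript $j+i$. For $0\le j\le N-i-1$ both superscripts exceed $j$, so by \eqref{eqn:bi} both weights reduce to the common ``middle'' expression and hence agree; but at the top index $j=N-i$ one has $b_{N-i}^{(N)}\ne b_{N-i}^{(N-i)}$, since the former uses the middle formula while the latter uses the special last-entry formula. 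Thus an end-correction \emph{is} required at $j=N-i$ for every $i\ge1$, not only for $i=0$ as you suggest in your last paragraph.

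The paper treats precisely this point: after the swap it records that $b_{i-n}^{(i)}=b_{i-n}^{(N-n)}$ for $i=n+1,\dots,N-1$, and then invokes the support/zero-extension of $\phi$ to argue that the factor $\phi(t+(N-n)\tau)$, which for $t\in(t_{n-1},t_n]$ lands in $(t_{N-1},T]$, vanishes, so the mismatched coefficient $b_{N-n}^{(N)}$ may be replaced by $b_{N-n}^{(N-n)}$ at no cost. This---not ``truncation of the upper limit''---is the actual role played by the vanishing of $\phi$ at the right end (and note that it really uses $\phi$ vanishing on the whole last subinterval, as is the case for the compactly supported test functions in the proof of Theorem~\ref{thm: main theorem}, rather than merely $\phi(T)=0$). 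Once you insert this one step, your computation goes through and coincides with the paper's proof.
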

\begin{proof}
By the definition of the L1 approximation in \eqref{eqn:L1}, we have
\begin{align*}
C_\alpha^{-1}\tau^\alpha {\rm LHS}&=\sum_{n=1}^N\int_{t_{n-1}}^{t_n}\Big[\varphi^n+\sum_{i=0}^{n-1}b_{n-i}^{(n)}\varphi^i\Big]\phi(t)\d t\\
&=\sum_{n=1}^{N}\int_{t_{n-1}}^{t_n}\varphi^n\phi(t)\d t + \sum_{n=1}^N\sum_{i=1}^{n-1}b_{n-i}^{(n)}\int_{t_{n-1}}^{t_n}\varphi^i\phi(t)\d t
+\sum_{n=1}^Nb_n^{(n)}\int_{t_{n-1}}^{t_n}\varphi^0 \phi(t)\d t\\
&=:  {\rm I} + {\rm II} + {\rm III}.
\end{align*}
By the definition of the interpolation $\varphi_\tau(t)$, the first term $\rm I$ can be rewritten as
\begin{equation*}
{\rm I}=\sum_{n=1}^N\int_{t_{n-1}}^{t_n}\varphi_\tau(t)\phi(t)\d t.
\end{equation*}
Now we turn to the term ${\rm II}$. Using the change of variables $t\mapsto t+(n-i)\tau$
and then applying the definition of the interpolation $\varphi_\tau(t)$, we deduce
\begin{align*}
{\rm II}&=\sum_{n=1}^N\sum_{i=1}^{n-1}b_{n-i}^{(n)}\int_{t_{i-1}}^{t_i}\varphi^i\phi(t+(n-i)\tau)\d t\\
&=\sum_{n=1}^N\sum_{i=1}^{n-1}b_{n-i}^{(n)}\int_{t_{i-1}}^{t_i}\varphi_\tau(t)\phi(t+(n-i)\tau)\d t.
\end{align*}
Next we interchange the order of summation and relabel the indices (with the convention that the sum is zero
when the lower index is greater than the upper index) to obtain
\begin{align*}
{\rm II}=&\sum_{i=1}^{N-1}\sum_{n=i+1}^Nb_{n-i}^{(n)}\int_{t_{i-1}}^{t_i}\varphi_\tau(t)\phi(t+(n-i)\tau)\d t\\
=&\sum_{n=1}^{N-1}\sum_{i=n+1}^Nb_{i-n}^{(i)}\int_{t_{n-1}}^{t_n}\varphi_\tau(t)\phi(t+(i-n)\tau)\d t\\
=&\sum_{n=1}^N\sum_{i=n+1}^Nb_{i-n}^{(i)}\int_{t_{n-1}}^{t_n}\varphi_\tau(t)\phi(t+(i-n)\tau)\d t.
\end{align*}
Now recall the definition of the weights $b_{i-n}^{(i)}$ in \eqref{eqn:bi}, there holds
\begin{equation*}
  b_{i-n}^{(i)} = b_{i-n}^{(N-n)},\quad i = n+1,\ldots, N-1.
\end{equation*}
Further, since $\phi$ is supported on $(0,T)$, we may change $b_{N-n}^{(N)}$ to $b_{N-n}^{(N-n)}$, and thus obtain
\begin{equation*}
  {\rm II}=\sum_{n=1}^N\sum_{i=n+1}^Nb_{i-n}^{(N-n)}\int_{t_{n-1}}^{t_n}\varphi_\tau(t)\phi(t+(i-n)\tau)\d t.
\end{equation*}
Consequently, since $b_{0}^{N-n}=1$ and using the definition of the notation $\overline{D}_\tau^\alpha\phi(t)$,
\begin{align*}
  {\rm I} + {\rm II} &= \sum_{n=1}^N\int_{t_{n-1}}^{t_n}\varphi_\tau\Big(\phi(t)+\sum_{i=n+1}^{N}b_{i-n}^{(N-n)}\phi(t+(i-n)\tau)\Big)\d t\\
                     &= \sum_{n=1}^N\int_{t_{n-1}}^{t_n}\varphi_\tau {\overline{D}_\tau^\alpha \phi(t)}\d t = \int_{0}^{T}\varphi_\tau {\overline{D}_\tau^\alpha \phi(t)}\d t.
\end{align*}
Then combining the preceding identities completes the proof of the lemma.
\end{proof}

The following result gives the error estimates of the L1 approximation for smooth functions.
\begin{theorem}\label{thm:err-approx}
The following error estimates hold
\begin{align*}
  \bar\partial_\tau^\alpha \varphi^{n}& = (\partial_t^\alpha \varphi)(t_n)  + O(\tau^{2-\alpha}),\quad \forall \varphi\in C^2[0,T]\\
  C_\alpha\tau^{-\alpha} \sum_{n=1}^{N}b_n^{(n)} \int_{t_{n-1}}^{t_n}
  \varphi(t)\d t &= -({_tI_T^\alpha \varphi})(0) + O(\tau),\quad \forall \varphi\in C^1[0,T].
\end{align*}
\end{theorem}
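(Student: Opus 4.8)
The plan is to establish the two estimates separately, since they rest on different mechanisms; write $M_k=\|\varphi\|_{C^k[0,T]}$.

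For the first estimate I would start from the exact error identity, obtained by splitting the Caputo integral at the grid points and subtracting the definition \eqref{eqn:L1},
\[
 (\partial_t^\alpha\varphi)(t_n)-\bar\partial_\tau^\alpha\varphi^n=\frac{1}{\Gamma(1-\alpha)}\sum_{i=1}^n\Big(\int_{t_{i-1}}^{t_i}(t_n-s)^{-\alpha}\varphi'(s)\d s-\frac{\varphi^i-\varphi^{i-1}}{\tau}\int_{t_{i-1}}^{t_i}(t_n-s)^{-\alpha}\d s\Big).
\]
Using $\varphi^i-\varphi^{i-1}=\int_{t_{i-1}}^{t_i}\varphi'$, the $i$-th summand is a ``covariance'' of $g(s)=(t_n-s)^{-\alpha}$ and $\varphi'$ over $I_i=(t_{i-1},t_i]$ with uniform weight, i.e.\ it equals $\tfrac{1}{2\tau}\int_{I_i}\int_{I_i}(g(s)-g(u))(\varphi'(s)-\varphi'(u))\d s\,\mathrm{d}u$. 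For $i\le n-1$ the weight $g$ is smooth on $I_i$ with $\sup_{I_i}|g'|\le\alpha(t_n-t_i)^{-\alpha-1}$, so together with $|\varphi'(s)-\varphi'(u)|\le M_2|s-u|$ and $|s-u|\le\tau$ this bounds the $i$-th term by $C M_2\tau^{2-\alpha}(n-i)^{-\alpha-1}$; since $\sum_{j\ge1}j^{-\alpha-1}<\infty$, these contributions sum to $O(\tau^{2-\alpha})$. For the remaining term $i=n$, where the weight is singular, I would instead use the crude bound $|\varphi'(s)-\tau^{-1}(\varphi^n-\varphi^{n-1})|\le M_2\tau$ on $I_n$ together with $\int_{t_{n-1}}^{t_n}(t_n-s)^{-\alpha}\d s=\tau^{1-\alpha}/(1-\alpha)$, which again yields $O(\tau^{2-\alpha})$. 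Combining the two regimes gives the first claim.

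For the second estimate the crux is the elementary identity, obtained from the fundamental theorem of calculus and the substitution $x=s/\tau$,
\[
 -b_n^{(n)}=n^{1-\alpha}-(n-1)^{1-\alpha}=(1-\alpha)\int_{n-1}^n x^{-\alpha}\d x=(1-\alpha)\tau^{\alpha-1}\int_{t_{n-1}}^{t_n}s^{-\alpha}\d s.
\]
Since $C_\alpha^{-1}=\Gamma(2-\alpha)=(1-\alpha)\Gamma(1-\alpha)$, this rewrites the left-hand side as
\[
 -\frac{1}{\Gamma(1-\alpha)\,\tau}\sum_{n=1}^N\Big(\int_{t_{n-1}}^{t_n}s^{-\alpha}\d s\Big)\Big(\int_{t_{n-1}}^{t_n}\varphi(t)\d t\Big),
\]
which is a product-quadrature approximation of $-\Gamma(1-\alpha)^{-1}\int_0^T s^{-\alpha}\varphi(s)\d s=-({}_tI_T^{1-\alpha}\varphi)(0)$, i.e.\ the boundary term appearing on the right-hand side and in \eqref{eq: weak formulation}. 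It then suffices to bound, on each $I_n$, the difference $\int_{I_n}s^{-\alpha}\varphi(s)\d s-\tau^{-1}(\int_{I_n}s^{-\alpha}\d s)(\int_{I_n}\varphi(t)\d t)$, which by $|\varphi(s)-\varphi(u)|\le M_1\tau$ is at most $CM_1\tau\int_{I_n}s^{-\alpha}\d s$; summing over $n$ and using $\int_0^T s^{-\alpha}\d s=T^{1-\alpha}/(1-\alpha)<\infty$ gives the $O(\tau)$ remainder.

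I expect the main obstacle to be the sharp rate in the first estimate: a naive termwise bound only produces $O(\tau)$, and one genuinely needs the second-order cancellation encoded in the covariance representation, together with the summability $\sum_j j^{-1-\alpha}<\infty$ of the (rescaled) derivative of the singular weight over the grid — this is precisely where the exponent $2-\alpha$ rather than $1$ originates. The second estimate is comparatively routine once the identity for $b_n^{(n)}$ is spotted; the only mild point of care is that the weight $s^{-\alpha}$, though unbounded near $t=0$, is integrable, so the contribution of the first subinterval $I_1$ (which contains the singularity) is in fact of order $\tau^{2-\alpha}$ and causes no trouble.
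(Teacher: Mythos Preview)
Your proposal is correct and, for the second estimate, follows essentially the same route as the paper: both rewrite $-b_n^{(n)}=(1-\alpha)\tau^{\alpha-1}\int_{t_{n-1}}^{t_n}s^{-\alpha}\d s$, identify the leading term as $-\Gamma(1-\alpha)^{-1}\int_0^T s^{-\alpha}\varphi(s)\d s$, and bound the remainder using $|\varphi(t)-\tau^{-1}\int_{I_n}\varphi|\le \|\varphi\|_{C^1}\tau$ together with the integrability of $s^{-\alpha}$. You also correctly note that the limit is $-({}_tI_T^{1-\alpha}\varphi)(0)$ (the quantity appearing in the weak formulation), which is what the paper's own computation actually produces; the exponent $\alpha$ in the displayed statement is a typo.

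For the first estimate the paper simply cites \cite{LinXu:2007}, whereas you supply a self-contained argument. Your covariance rewriting of each subinterval contribution and the split into the regular part $i\le n-1$ (summed via $\sum_j j^{-1-\alpha}<\infty$) and the singular cell $i=n$ is the standard mechanism behind the $O(\tau^{2-\alpha})$ rate and is carried out correctly.
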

\begin{proof}
The first estimate can be found at \cite[equations (3.12) and (3.13)]{LinXu:2007}.
It suffices to show the second estimate. Using the expression of the weight $b_n^{(n)}$,
we may rewrite the left hand side as (with $C_\alpha'=-C_\alpha(1-\alpha)=-\frac{1}{\Gamma(1-\alpha)}$
\begin{align*}
  {\rm LHS} &= C_\alpha'\sum_{n=1}^{N}\int_{t_{n-1}}^{t_n}t^{-\alpha}\d t \Big(\tau^{-1}\int_{t_{n-1}}^{t_n}
  \varphi(s)\d s\Big)\\
   &=C_\alpha'\sum_{n=1}^{N}\int_{t_{n-1}}^{t_n}t^{-\alpha}\varphi(t)\d t - C_\alpha'\sum_{n=1}^{N}\int_{t_{n-1}}^{t_n}t^{-\alpha}\Big( \varphi(t)-\tau^{-1}\int_{t_{n-1}}^{t_n}\varphi(s)\d s\Big)\d t\\
   &=C_\alpha'\int_0^{T}t^{-\alpha}\varphi(t)\d t - C_\alpha'\sum_{n=1}^{N}\int_{t_{n-1}}^{t_n}t^{-\alpha}\Big( \varphi(t)-\tau^{-1}\int_{t_{n-1}}^{t_n}\varphi(s)\d s\Big)\d t.
\end{align*}
Next we bound the term in the bracket by
\begin{align*}
   |\varphi(t)-\tau^{-1}\int_{t_{n-1}}^{t_n}\varphi(s)\d s| & = |\tau^{-1}\int_{t_{n-1}}^{t_n}\varphi(t)-\varphi(s)\d s |
    \leq \|\varphi\|_{C^1[0,T]}\tau.
\end{align*}
Combining the last two estimates gives the desired estimate.
\end{proof}

\section{Time-fractional JKO  scheme}\label{sec:JKO}

Now we construct a JKO type scheme for problem \eqref{eq:fractionalFP},
and give the main result of the work.

\subsection{Wasserstein distance}
The Wasserstein distance of order two, denoted by $W_2(\mu_1,\mu_2)$, between  two (Borel) probability measures
$\mu_1$ and $\mu_2$ on $\mathbb{R}^d$ is defined by
\begin{equation}\label{eqn:Wasserstein}
  W_2(\mu_1,\mu_2) ^2  = \inf_{p\in \mathcal{P}(\mu_1,\mu_2)}\int_{\mathbb{R}^d\times\mathbb{R}^d}|x-y|^2p(\d x\d y),
\end{equation}
where $\mathcal{P}(\mu_1,\mu_2)$ is the set of all probability measures on $\mathbb{R}^{d}\times\mathbb{R}^d$
with the first marginal $\mu_1$ and second marginal $\mu_2$, and the symbol $|\cdot|$ denotes the usual
Euclidean norm on $\mathbb{R}^d$. That is, a probability measure $p$ is in $\mathcal{P}(\mu_1,\mu_2)$ if
and only if for each Borel subset $A\subset\mathbb{R}^d$ there holds
\begin{equation*}
  p(A\times\mathbb{R}^d)=\mu_1(A)\quad \mbox{and}\quad p(\mathbb{R}^d\times A) = \mu_2(A).
\end{equation*}
It is well known that $W_2(\mu_1,\mu_2)$ defines a metric on the set of probability measure $\mu$ on
$\mathbb{R}^d$ having finite second moments: $\int_{\mathbb{R}^d}|x|^2\mu(\d x)<\infty$ \cite{GivensShortt:1984}.

The variational problem \eqref{eqn:Wasserstein} is an example of a Monge-Kantorovich mass transport
problem with a cost function  $c(x,y)=|x-y|^2$. In that context, an infimizer $p^*$ is referred to
as an optimal (transport) plan; see \cite{GivensShortt:1984} for a probabilistic proof that the
infimum in \eqref{eqn:Wasserstein} exists when the measures $\mu_1$ and $\mu_2$ have finite second moments. Brenier
\cite{Brenier:1991} established the existence of a one-to-one optimal (transport) plan in the case that the measures
$\mu_1$ and $\mu_2$ have bounded support and are absolutely continuous with respect to Lebesgue measure.

\subsection{Time-fractional JKO scheme}

Next we derive the fractional analogue of the JKO scheme for problem \eqref{eq:fractionalFP}. The classical JKO scheme
\cite{JordanKinderlehrerOtto:1998} for the FPE \eqref{eqn:FPE} is based on the backward Euler approximation
of the first-order derivative $\partial_t \rho$ in time. Hence, naturally, the fractional analogue relies on a
backward Euler type approximation to the Caputo derivative $\partial_t^\alpha \rho$. We shall employ the L1
approximation \cite{LinXu:2007} described in Section \ref{sec:L1}. By combining the classical JKO scheme
\cite{JordanKinderlehrerOtto:1998} and the L1 approximation of the fractional time derivative $\partial_t^\alpha
\rho$, we obtain a JKO type scheme for problem \eqref{eq:fractionalFP} as follows.
\begin{scheme}[Discrete variational approximation scheme for the model \eqref{eq:fractionalFP}]
\label{sc: app scheme}
Let $\rho^0:=\rho_0$. Given $\rho^0$, find $\rho^{n}$, $n=1,2,\ldots,N$, as the unique minimizer of
\begin{equation}\label{eq: minimization prob}
\frac{C_\alpha}{2\tau^\alpha}W_2^2(\rho,\overline{\rho}^{n-1})+\F(\rho),
\end{equation}
over $\rho\in \P_2(\R^d)$, where $\overline{\rho}^{n-1}$ and $ \F(\rho)$ are defined respectively by
\[
\overline{\rho}^{n-1}:=\sum_{i=0}^{n-1} (-b_{n-i}^{(n)})\rho^i\quad\mbox{and}\quad \F(\rho):=\E(\rho)+\S(\rho),
\]
with
\begin{equation*}
  \E(\rho)=\int_{\R^d} \Psi\rho\d x\quad \mbox{and}\quad \S(\rho) = \int_{\R^d} \rho\log\rho\,\d x.
\end{equation*}
\end{scheme}

Given $\rho^0\in \mathcal{P}_2(\mathbb{R}^d)$, the existence and uniqueness of a minimizer of  Scheme \ref{sc: app scheme}
was proven in \cite[Proposition 4.1]{JordanKinderlehrerOtto:1998}.  In view of Lemma \ref{lem: properties of b}(i),
$\sum_{i=0}^{n-1}(-b_{n-i}^{(n)})=1$, and thus $\overline{\rho}^{n-1}$ is a convex combination of all past approximations
$\{\rho^i\}_{i=0}^{n-1}$. This property plays a crucial role in the convergence analysis. One distinct feature of the
scheme is that instead of using only the immediate previous density
$\rho^{n-1}$ in \eqref{eq: minimization prob} as in the classical JKO-scheme, it employs a \textit{convex} combination
$\overline{\rho}^{n-1}$ of all previous densities $\{\rho^i\}_{i=0}^{n-1}$. This is to capture the memory effect (and
thus non-Markovian nature) of the continuous time-fractional FPE. In the limiting case $\alpha=1$, it is identical
with the classical JKO scheme (see the properties of the L1 approximation in Section \ref{sec:L1}).

Below we shall make one minor assumption on $\Psi$. Note that the assumption
$\Psi(x)\geq 0$ can be relaxed to that $\Psi$ is bounded from below.
\begin{assumption}\label{ass:Psi}
$\Psi(x)\in C^\infty(\mathbb{R}^d)$, $\Psi(x)\geq 0$ and $|\nabla \Psi(x)|\leq C(|x|+1)$ for all $x\in\mathbb{R}^d$.
\end{assumption}

\begin{remark}
There are other possible formulations of JKO type schemes for the time-fractional FPE \eqref{eq:fractionalFP}. For example, the following formulation seems
also feasible. Given $\rho^0$, find $\rho^n$, $n=1,2,\ldots,N$, by minimizing over $\mathcal{P}_2(\mathbb{R}^d)$ the following functional
\begin{equation*}
\frac{C_\alpha}{2\tau^\alpha}\sum_{i=0}^{n-1}(-b_{n-i}^{(n)})W_2^2(\rho,\rho^i)+\F(\rho).
\end{equation*}
By the convexity of the Wasserstein distance, this functional is an upper bound of the one in \eqref{eq: minimization prob}.
However, it involves multiple Wasserstein distances and thus is computationally far less convenient. Thus it is not explored in
this work.
\end{remark}

The next result represents the main theoretical contribution of the paper, i.e., the convergence of the
discrete approximations $\{\rho^n\}_{n=1}^N$. The proof of the theorem is lengthy and technical
and  will be given in Section \ref{sec:conv}.
\begin{theorem}\label{thm: main theorem}
Let $\rho_0\in \P_2(\R^d)$ satisfy $\F(\rho_0)<\infty$. For any fixed $\tau>0$, let $\{\rho^n\}_{n=1}^N$
be the sequence of minimizers given by Scheme~\ref{sc: app scheme}. For any $t\geq 0$, we define a
picewise-constant time interpolation: with $\rho_\tau(0)=\rho^0$ and
\begin{equation}\label{eq: interpolation}
\rho_\tau(t,x)=\rho^n(x)\qquad\text{for}~~(n-1)\tau<t\leq n\tau,\quad n=1,\ldots, N.
\end{equation}
Then under Assumption \ref{ass:Psi}, for any $T>0$,
\begin{equation}\label{eq: weaklimit}
\rho_\tau \to \rho\quad\text{weakly in}\quad L^1\big((0,T)\times\R^d\big)~~~\text{as}~~\tau\to 0,
\end{equation}
where $\rho$ is the unique weak solution to problem \eqref{eq:fractionalFP} in the
sense of Definition~\ref{def: weak sol}.
\end{theorem}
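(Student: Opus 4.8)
The plan is to follow the classical JKO strategy of Jordan--Kinderlehrer--Otto, but with the nonlocal weights $b_i^{(n)}$ replacing the single backward-difference term; the compactness-and-limit-passage architecture is the same, while the a priori estimates and the Euler--Lagrange computation must be adapted to the convex combination $\overline{\rho}^{n-1}$. First I would derive the basic energy estimate: testing the optimality of $\rho^n$ against the competitor $\overline{\rho}^{n-1}$ in \eqref{eq: minimization prob} gives
\begin{equation*}
   \frac{C_\alpha}{2\tau^\alpha}W_2^2(\rho^n,\overline{\rho}^{n-1}) + \F(\rho^n) \leq \F(\overline{\rho}^{n-1}) \leq \sum_{i=0}^{n-1}(-b_{n-i}^{(n)})\F(\rho^i),
\end{equation*}
the last step by convexity of $\F$ (convexity of $\E$ is trivial and convexity of $\S$ is standard). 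Rearranging and using $\sum_{i=0}^{n-1}(-b_{n-i}^{(n)})=1$ from Lemma \ref{lem: properties of b}, this reads $\bar\partial_\tau^\alpha\F(\rho^n) + \frac{C_\alpha}{2\tau^{2\alpha}}W_2^2(\rho^n,\overline{\rho}^{n-1})\le 0$ in a suitable discrete sense; summing and using the summation identities of Lemma \ref{lem: properties of b} (for $\sum(-b_n^{(n)})$ and $\sum(-b_j^{(j+i)})$) yields $\F(\rho^n)\le \F(\rho^0)$ for all $n$ together with a bound on $\sum_n W_2^2(\rho^n,\overline{\rho}^{n-1})/\tau^\alpha$ — this is the fractional analogue of the classical telescoping estimate, and the combinatorial weight bookkeeping here is one genuinely new ingredient. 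Next, from the free-energy bound plus Assumption \ref{ass:Psi} one controls the second moment $M_2(\rho^n)$ uniformly (using $\Psi\ge0$, the entropy–moment interpolation that bounds the negative part of $\rho\log\rho$ by $M_2$, and the discrete Gronwall inequality of Lemma \ref{lem:Gronwall} applied to $\phi^n=M_2(\rho^n)$, since $\bar\partial_\tau^\alpha M_2(\rho^n)$ is controlled by $W_2(\rho^n,\overline\rho^{n-1})$ and hence, after Cauchy--Schwarz, by $C_1+C_2M_2(\rho^n)$). Together these give tightness of $\{\rho_\tau(t,\cdot)\}$ uniformly in $t\in[0,T]$ and a uniform entropy bound, so by Dunford--Pettis $\rho_\tau$ is weakly relatively compact in $L^1((0,T)\times\R^d)$; extract a subsequence with $\rho_\tau\rightharpoonup\rho$.

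The second block is the discrete Euler--Lagrange equation. For a smooth compactly supported vector field $\xi$, I would push $\rho^n$ forward along the flow of $\xi$ and differentiate the functional \eqref{eq: minimization prob} at the minimizer; the $\F$-part contributes the usual $\int(\nabla\Psi\cdot\xi + \nabla\cdot\xi)\rho^n$ (from $\E$ and $\S$ respectively), and the Wasserstein part contributes $\frac{C_\alpha}{\tau^\alpha}\int\!\!\int (x-y)\cdot\xi(x)\,p_n(\d x\d y)$ where $p_n$ is an optimal plan between $\rho^n$ and $\overline\rho^{n-1}$. Choosing $\xi=\nabla\varphi(t_n,\cdot)$ for a test function $\varphi$ with $\varphi(T)=0$, multiplying by the right quantities and summing over $n$, the first-order Wasserstein term is rearranged (via $C_\alpha\tau^{-\alpha}\sum_i(-b_{n-i}^{(n)})(\rho^n-\rho^i)$, a second-order Taylor expansion of $\varphi$ with error $O(\tau^2)$ controlled against $\sum W_2^2(\rho^n,\overline\rho^{n-1})$ from the energy estimate) into the discrete operator $\bar\partial_\tau^\alpha$ acting on $\int\varphi\,\rho_\tau$. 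Then the semidiscrete integration-by-parts identity of Lemma \ref{lem:int-by-part-semi} moves $\bar\partial_\tau^\alpha$ onto the test function, producing $\int_0^T\!\!\int \rho_\tau\,\overline{D}_\tau^\alpha\varphi$ plus the boundary term $C_\alpha\tau^{-\alpha}\varphi^0\sum_n b_n^{(n)}\int_{t_{n-1}}^{t_n}\varphi$, which by Theorem \ref{thm:err-approx} converges to $-({_tI_T^\alpha\varphi})(0)\rho_0$-type quantity matching the right-hand side of \eqref{eq: weak formulation}. Passing $\tau\to0$: the linear terms $\int\rho_\tau(\nabla\Psi\cdot\nabla\varphi-\Delta\varphi)$ pass to the limit by weak $L^1$ convergence against the bounded continuous coefficient (using the moment bound to handle the growth of $\nabla\Psi$ by a cutoff/truncation argument), and $\overline{D}_\tau^\alpha\varphi\to{_tD_T^\alpha}\varphi$ uniformly by the error estimate in Theorem \ref{thm:err-approx} (this is the right-sided analogue, obtained by the same change of variables noted after \eqref{eqn:L1-right}), so $\int_0^T\!\!\int\rho_\tau\,\overline{D}_\tau^\alpha\varphi\to\int_0^T\!\!\int\rho\,{_tD_T^\alpha}\varphi$. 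This shows $\rho$ satisfies \eqref{eq: weak formulation}.

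Finally, uniqueness of the weak solution (invoked in the statement) follows from the well-posedness of the equivalent formulation \eqref{eq: equivalent form} cited from \cite[Theorem 3.3]{CamilliDeMaio:2017}, or from an energy/Gronwall argument on the difference of two solutions; since every subsequential weak limit solves \eqref{eq: weak formulation} and the weak solution is unique, the whole family $\rho_\tau$ converges, giving \eqref{eq: weaklimit}.

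The main obstacle I anticipate is the Wasserstein-term manipulation in the Euler--Lagrange step: unlike the classical case where one has a clean telescoping $\rho^n-\rho^{n-1}$, here one must show that $\frac{C_\alpha}{\tau^\alpha}\int(x-y)\cdot\nabla\varphi(t_n,x)\,p_n(\d x\d y)$ reconstructs exactly $\int(\bar\partial_\tau^\alpha\rho^n)\varphi(t_n)$ up to an error that is summable and vanishes — this requires carefully matching the optimal plan between $\rho^n$ and the \emph{convex combination} $\overline\rho^{n-1}$ against the weighted differences $\sum_i(-b_{n-i}^{(n)})(\rho^n-\rho^i)$, and controlling the second-order remainder by $\sum_n W_2^2(\rho^n,\overline\rho^{n-1})\cdot\|D^2\varphi\|_\infty$, where the crucial bound $\sum_n \tau^{-\alpha}W_2^2(\rho^n,\overline\rho^{n-1})\le C$ is exactly what the energy estimate supplies, so that the remainder is $O(\tau^\alpha)\to0$. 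The nonnegativity $b_i^{(n)}<0$ for $i\ge1$ (Lemma \ref{lem: properties of b}) is what makes $\overline\rho^{n-1}$ a genuine probability measure and keeps every estimate sign-definite; this is used pervasively.
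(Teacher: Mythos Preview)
Your proposal is correct and follows essentially the same route as the paper: energy estimate via convexity of $\F$ and the competitor $\overline\rho^{n-1}$, the induction/summation with the weight identities of Lemma~\ref{lem: properties of b} to get $\F(\rho^n)\le\F(\rho^0)$ and $\sum_n W_2^2(\rho^n,\overline\rho^{n-1})\le C\tau^\alpha$, the moment bound via the discrete Gronwall Lemma~\ref{lem:Gronwall}, Dunford--Pettis compactness, the Euler--Lagrange equation, Taylor expansion against the optimal plan, the semidiscrete integration by parts Lemma~\ref{lem:int-by-part-semi}, and passage to the limit via Theorem~\ref{thm:err-approx}.

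One small clarification on the point you flag as the ``main obstacle'': it is simpler than you suggest. You do \emph{not} need to match the optimal plan against the individual weighted differences $\sum_i(-b_{n-i}^{(n)})(\rho^n-\rho^i)$. The optimal plan $P^n$ couples $\rho^n$ with the \emph{single} measure $\overline\rho^{n-1}$, so a single Taylor expansion gives directly
\[
\int_{\R^d}(\rho^n-\overline\rho^{n-1})\varphi\,\d x=\int_{\R^{2d}}(y-x)\cdot\nabla\varphi(y)\,P^n(\d x\d y)+\varepsilon_n,\qquad |\varepsilon_n|\le C\,W_2^2(\rho^n,\overline\rho^{n-1}),
\]
and $\rho^n-\overline\rho^{n-1}$ is already $C_\alpha^{-1}\tau^\alpha\,\bar\partial_\tau^\alpha\rho^n$ by definition. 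After integrating in $t$ over $[t_{n-1},t_n]$ and summing, the total remainder is $C\tau^{-\alpha}\cdot\tau\cdot\sum_n W_2^2\le C\tau$ (not $O(\tau^\alpha)$, though of course either vanishes). The paper handles the moment bound by testing the Euler--Lagrange equation itself with $\varphi=|x|^2$ (rather than a direct $W_2$--moment comparison), which makes the inequality $\bar\partial_\tau^\alpha M_2(\rho^n)\le C(1+M_2(\rho^n))$ immediate from the growth assumption on $\nabla\Psi$; you may find that route cleaner than the Cauchy--Schwarz detour you sketch.
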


\section{Proof of Theorem \ref{thm: main theorem}}\label{sec:conv}

This section is devoted to the convergence analysis of Scheme \ref{sc: app scheme}, i.e., the proof of Theorem \ref{thm: main theorem}.
First, we give the Euler-Lagrange equation for the sequence of minimizers.
\begin{lemma}\label{lem: EL eqn}
Let $\tau>0$ and $\{\rho^{n}\}_{n=1}^N$ be the sequence of minimizer given by Scheme~\ref{sc: app scheme}, and $P^{n}$
the optimal plan for the Wasserstein distance $W_2(\rho^{n},\overline{\rho}^{n-1})$ between
$\overline{\rho}^{n-1}$ and $\rho^{n}$. Then for all $\varphi\in C_0^\infty(\R^d)$, there holds
\begin{equation}\label{eq: EL eqn}
\frac{C_\alpha}{\tau^\alpha}\int_{\R^{2d}}(y-x)\cdot\nabla\varphi(y)P^n(\d x\d y)+\int_{\R^d}\Big(\nabla\Psi\cdot\nabla\varphi-\Delta\varphi\Big)\rho^{n}(x)\,\d x=0.
\end{equation}
\end{lemma}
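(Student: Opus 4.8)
The plan is to derive the Euler--Lagrange equation by a standard perturbation/variational argument, pushing the minimizer $\rho^n$ along the flow generated by a fixed test vector field and differentiating the functional in \eqref{eq: minimization prob} at the minimizer. Concretely, fix $\varphi\in C_0^\infty(\R^d)$ and let $\Phi_s$ denote the flow of $\nabla\varphi$, i.e. $\partial_s\Phi_s(y)=\nabla\varphi(\Phi_s(y))$ with $\Phi_0=\mathrm{id}$. For small $s$, $\Phi_s$ is a diffeomorphism of $\R^d$, and I push forward the minimizer to define $\rho^n_s:=(\Phi_s)_\#\rho^n\in\P_2(\R^d)$ (the second moment and normalization are preserved, and $\F(\rho^n_s)<\infty$ follows from $\F(\rho^n)<\infty$). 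Since $\rho^n$ minimizes the functional, the function $s\mapsto \frac{C_\alpha}{2\tau^\alpha}W_2^2(\rho^n_s,\overline{\rho}^{n-1})+\F(\rho^n_s)$ has a minimum at $s=0$, so its right derivative at $s=0$ is nonnegative; replacing $\varphi$ by $-\varphi$ gives the reverse inequality, hence the derivative vanishes. This yields \eqref{eq: EL eqn}.

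The second step is to compute the three derivatives at $s=0$. For the Wasserstein term I use the fact that $P^n$ is an optimal plan between $\overline{\rho}^{n-1}$ and $\rho^n$: the plan $(\mathrm{id}\times\Phi_s)_\#$ applied to the $y$-marginal gives an admissible (not necessarily optimal) plan between $\overline{\rho}^{n-1}$ and $\rho^n_s$, so $W_2^2(\rho^n_s,\overline{\rho}^{n-1})\le\int_{\R^{2d}}|\Phi_s(y)-x|^2\,P^n(\d x\,\d y)$, with equality at $s=0$; differentiating the right-hand side at $s=0$ gives $2\int_{\R^{2d}}(y-x)\cdot\nabla\varphi(y)\,P^n(\d x\,\d y)$, which is then an upper bound for the one-sided derivative of $W_2^2(\rho^n_s,\overline{\rho}^{n-1})$, and the standard two-sided argument (with $\pm\varphi$) turns this into the exact derivative. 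For the potential energy $\E$, a change of variables gives $\E(\rho^n_s)=\int\Psi(\Phi_s(x))\rho^n(x)\,\d x$, whose $s$-derivative at $0$ is $\int\nabla\Psi\cdot\nabla\varphi\,\rho^n\,\d x$ (here Assumption~\ref{ass:Psi} and the finite second moment of $\rho^n$ control integrability). For the entropy $\S$, the change-of-variables formula gives $\S(\rho^n_s)=\int\rho^n\log\rho^n\,\d x-\int\log\det(D\Phi_s)\,\rho^n\,\d x$, and using $\frac{\d}{\d s}\big|_{s=0}\log\det D\Phi_s=\div(\nabla\varphi)=\Delta\varphi$ yields derivative $-\int\Delta\varphi\,\rho^n\,\d x$. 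Summing the three contributions, dividing by $2$ (absorbing the factor from the Wasserstein term), and renaming the integration variable in the plan term gives exactly \eqref{eq: EL eqn}.

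The main obstacle is the entropy term: justifying the interchange of differentiation and integration for $\S(\rho^n_s)$ and the one-sided differentiability of $s\mapsto\S(\rho^n_s)$ requires care, since $\rho^n\log\rho^n$ is only integrable, not obviously uniformly so along the perturbation, and $\log\rho^n$ itself need not be bounded. This is handled exactly as in \cite[Proposition 4.1]{JordanKinderlehrerOtto:1998}: because $\Phi_s$ and $\Phi_s^{-1}$ have Jacobians uniformly close to $1$ on the compact support of $\nabla\varphi$ and equal to $1$ outside it, one obtains a uniform $L^1$ domination of the difference quotients, legitimizing the limit. The remaining steps — smoothness and invertibility of $\Phi_s$, admissibility of the competitor plan, and the elementary $s$-derivatives — are routine, so I would cite \cite{JordanKinderlehrerOtto:1998} for the entropy and potential computations and present only the Wasserstein-term variation in detail, noting that the nonlocal object $\overline{\rho}^{n-1}$ enters merely as a fixed measure and so causes no additional difficulty.
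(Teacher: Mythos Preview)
Your proposal is correct and follows essentially the same approach as the paper's own proof: both use the push-forward of $\rho^n$ along the flow of a smooth compactly supported vector field, differentiate the functional at $s=0$, and cite \cite{JordanKinderlehrerOtto:1998} for the technical computations (in particular the entropy variation). The only cosmetic difference is that the paper first derives the stationarity condition for a general field $\xi\in C_c^\infty(\R^d,\R^d)$ and then specializes to $\xi=\nabla\varphi$, whereas you work directly with $\nabla\varphi$ and spell out the Wasserstein-term variation in more detail; neither difference is substantive.
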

\begin{proof}
The derivation of the Euler-Lagrange equation for the sequence $\{\rho^{n}\}_{n=1}^N$ of minimizers  follows the
now well-established procedure; see e.g., \cite{JordanKinderlehrerOtto:1998,Huang:2000,DuongPeletierZimmer:2014}. We only sketch
the main steps below. Let $\xi\in C_c^\infty(\R^d,\R^d)$, and we define a flow $\Phi:[0,\infty)\times
\R^d\to\R^d$ by
\begin{equation*}
  \frac{\partial\Phi_s}{\partial s}=\xi(\Phi_s),\quad\mbox{with }\Phi_0={\rm Id}.
\end{equation*}
For any $s\in\mathbb{R}$, let $\rho_s(y)\d y$ be the push-forward of the measure $\rho^{n}(y)\d y$ under $\Phi_s$. That is, for
any $\zeta\in C_0^\infty(\R^d)$, we have
\begin{equation*}
\int_{\R^d}\rho_s(y)\zeta(y)\,\d y=\int_{\R^d}\rho^{n}(y)\zeta(\Phi_s(y))\,\d y.
\end{equation*}
Since $\Phi_0={\rm Id}$, it follows that $\rho_0(y)=\rho^{n}(y)$ and an explicit calculation yields
\begin{equation*}
\partial_s\rho_s\big\vert_{s=0}=-\div (\rho^{n}\xi).
\end{equation*}
Following the computations in \cite{JordanKinderlehrerOtto:1998},
we derive the following stationarity condition on $\rho^{n}$:
\begin{equation}\label{eq: EL for rhonh}
\frac{C_\alpha}{\tau^\alpha}\int_{\R^{2d}}(y-x)\cdot\xi(y)P^n(\d x\d y)+\int_{\R^d}\Big(\nabla\Psi\cdot\xi-\div\xi\Big)\rho^{n}(x)\d x=0,
\end{equation}
where $P^n$ is the optimal plan in the definition of the Wasserstein distance $W_2(\rho^{n},\overline{\rho}^{n-1})$
between $\bar{\rho}^{n-1}$ and $\rho^{n}$. For any $\varphi\in C_0^\infty(\R^d)$, by choosing $\xi=\nabla\varphi$ in~\eqref{eq: EL for rhonh}, we get
\begin{equation*}
\frac{C_\alpha}{\tau^\alpha}\int_{\mathbb{R}^{2d}}(y-x)\cdot\nabla\varphi(y)P^n(\d x\d y)+\int_{\mathbb{R}^d}\Big(\nabla\Psi\cdot\nabla\varphi-\Delta\varphi\Big)\rho^{n}(x)\d x=0.
\end{equation*}
This completes the proof of the lemma.
\end{proof}

The next result is an immediate corollary of Lemma \ref{lem: EL eqn}.
\begin{corollary}\label{cor:EL}
The solutions $\{\rho^n\}_{n=1}^N$ given by Scheme \ref{sc: app scheme} satisfy for all $\varphi\in C_c^\infty(\mathbb{R}^d)$ and all $n=1,\ldots,N$:
\begin{equation*}
  \left|\int_{\mathbb{R}^d}\frac{C_\alpha(\rho^n-\overline{\rho}^{n-1})}{\tau^\alpha}\varphi\d x + \int_{\mathbb{R}^d} (\nabla \Psi\cdot\nabla\varphi - \Delta \varphi)\rho^n\d x\right|
  \leq \sup_x\frac{\|\nabla^2 \varphi(x)\|}{2}\tau^{-\alpha}W_2^2(\rho^n,\overline{\rho}^{n-1}),
\end{equation*}
where $\nabla^2 \varphi\in\mathbb{R}^{d\times d}$ denotes the Hessian of $\varphi$, and $\|\cdot\|$ denotes the spectral norm of a matrix.
\end{corollary}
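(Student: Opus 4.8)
The plan is to derive Corollary~\ref{cor:EL} directly from the Euler--Lagrange identity \eqref{eq: EL eqn} in Lemma~\ref{lem: EL eqn} by splitting the first integral there into a ``main'' part that matches $\int_{\R^d}\frac{C_\alpha(\rho^n-\overline\rho^{n-1})}{\tau^\alpha}\varphi\d x$ and a ``remainder'' part that is controlled by the second-order Taylor expansion of $\varphi$. First I would write the transport integral as
\begin{equation*}
  \int_{\R^{2d}}(y-x)\cdot\nabla\varphi(y)\,P^n(\d x\d y)
  = \int_{\R^{2d}}\big(\varphi(y)-\varphi(x)\big)P^n(\d x\d y) + \int_{\R^{2d}}R(x,y)\,P^n(\d x\d y),
\end{equation*}
where, by Taylor's theorem with integral (or Lagrange) remainder, $R(x,y) = (y-x)\cdot\nabla\varphi(y) - \big(\varphi(y)-\varphi(x)\big)$ satisfies $|R(x,y)|\le \tfrac12\sup_z\|\nabla^2\varphi(z)\|\,|x-y|^2$. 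Using the marginal conditions $P^n(\cdot\times\R^d)=\overline\rho^{n-1}$ and $P^n(\R^d\times\cdot)=\rho^n$, the main term becomes exactly $\int_{\R^d}\varphi\,\rho^n\d x - \int_{\R^d}\varphi\,\overline\rho^{n-1}\d x = \int_{\R^d}(\rho^n-\overline\rho^{n-1})\varphi\d x$.

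Next I would substitute this decomposition back into \eqref{eq: EL eqn}, multiply through by the factor $C_\alpha/\tau^\alpha$ already present, and move the remainder term to the right-hand side. This yields
\begin{equation*}
  \int_{\R^d}\frac{C_\alpha(\rho^n-\overline\rho^{n-1})}{\tau^\alpha}\varphi\d x + \int_{\R^d}(\nabla\Psi\cdot\nabla\varphi-\Delta\varphi)\rho^n\d x
  = -\frac{C_\alpha}{\tau^\alpha}\int_{\R^{2d}}R(x,y)\,P^n(\d x\d y).
\end{equation*}
Taking absolute values and inserting the bound on $|R|$, together with $\int_{\R^{2d}}|x-y|^2P^n(\d x\d y) = W_2^2(\rho^n,\overline\rho^{n-1})$ by the optimality of $P^n$, gives the claimed estimate, provided the stray constant $C_\alpha$ is absorbed — indeed $C_\alpha = 1/\Gamma(2-\alpha)\le 1$ for $\alpha\in(0,1)$ since $\Gamma(2-\alpha)\ge\Gamma(1)=\Gamma(2)=1$ on that range (the Gamma function being $\ge1$ between its values at $1$ and $2$), so $\tfrac12 C_\alpha\tau^{-\alpha}\|\nabla^2\varphi\|\,W_2^2\le \tfrac12\tau^{-\alpha}\|\nabla^2\varphi\|\,W_2^2$ as stated. (Alternatively one simply keeps $C_\alpha$ in the bound; the paper's version implicitly uses $C_\alpha\le1$.)

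The only genuinely delicate point is handling the Taylor remainder when $\varphi$, although smooth, is only assumed compactly supported so that $\sup_z\|\nabla^2\varphi(z)\|$ is finite — this is fine here since $\varphi\in C_c^\infty(\R^d)$, so the uniform Hessian bound is legitimate and the remainder estimate $|R(x,y)|\le\tfrac12\sup\|\nabla^2\varphi\|\,|x-y|^2$ holds for all $x,y$. Thus there is no real obstacle: the corollary is a one-line consequence of Lemma~\ref{lem: EL eqn} once the transport term is re-expressed via the marginals and a second-order Taylor expansion. The bookkeeping with the constant $C_\alpha$ is the only thing to state carefully.
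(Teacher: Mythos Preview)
Your approach is exactly the one the paper intends: the paper's ``proof'' simply points forward to \eqref{timederivativeappro} and \eqref{epsilon}, which are precisely the marginal identity and the second-order Taylor remainder bound you wrote out. So the argument is correct and matches the paper.

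One correction to your bookkeeping on the constant: your claim that $C_\alpha=1/\Gamma(2-\alpha)\le 1$ is false. For $\alpha\in(0,1)$ we have $2-\alpha\in(1,2)$, and on that interval the Gamma function dips \emph{below} $1$ (its minimum is $\Gamma(1.4616\ldots)\approx 0.8856$), so in fact $C_\alpha\ge 1$. Thus you cannot drop $C_\alpha$ from the right-hand side by that reasoning; the natural bound produced by your argument is $\tfrac{C_\alpha}{2}\tau^{-\alpha}\sup_x\|\nabla^2\varphi(x)\|\,W_2^2(\rho^n,\overline{\rho}^{n-1})$. The paper's stated right-hand side is missing this $C_\alpha$ --- a harmless typo, since the corollary is only used in Lemma~\ref{lem:moment} where the constant is immediately absorbed into a generic $C$. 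Just keep the $C_\alpha$ in your write-up rather than trying to argue it away.
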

\begin{proof}
The assertion follows identically with \eqref{timederivativeappro} and \eqref{epsilon} below, and hence it is omitted.
\end{proof}

In the next few lemmas, we derive several important \textit{a priori} estimates on the sequence $\{\rho^n\}_{n=1}^N$
of approximations. These estimates are analogous to~(42)-(45) in~\cite{JordanKinderlehrerOtto:1998}. However, due to
the appearance of the convex combination density $\overline{\rho}^{n-1}$ instead of $\rho^{n-1}$ in Scheme~\ref{sc: app scheme},
the derivation of these estimates is more involved than that in~\cite{JordanKinderlehrerOtto:1998}.

First, we derive elementary inequalities for $\F(\overline{\rho}^n)$, using convexity of $\F(\rho)$.
\begin{lemma}\label{lem:convex}
For any $n$, there holds
\begin{align}
  \F(\overline{\rho}^{n-1}) & \leq \sum_{i=0}^{n-1} (-b_{n-i}^{(n)})\F(\rho^i),\label{eqn:bdd-F-brho}\\
\sum_{i=1}^{n}\F(\bar{\rho}^{i-1}) &\leq n^{1-\alpha}\F(\rho^0)+\sum_{i=1}^{n-1}\big(1+(n-i)^{1-\alpha}-(n-i+1)^{1-\alpha}\big)\F(\rho^i).\label{eq: bound sum F}
\end{align}
\end{lemma}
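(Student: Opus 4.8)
The plan is to derive both inequalities from the convexity of the free energy $\F(\rho)=\E(\rho)+\S(\rho)$ together with the combinatorial identities for the weights $-b_i^{(n)}$ collected in Lemma \ref{lem: properties of b}. The first inequality \eqref{eqn:bdd-F-brho} is the key step, and the second \eqref{eq: bound sum F} follows from it by summation and bookkeeping with the weights.

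For \eqref{eqn:bdd-F-brho}: recall $\overline{\rho}^{n-1}=\sum_{i=0}^{n-1}(-b_{n-i}^{(n)})\rho^i$ is, by Lemma \ref{lem: properties of b}(i), a genuine convex combination of $\rho^0,\dots,\rho^{n-1}$, since $-b_{n-i}^{(n)}>0$ for $i=0,\dots,n-1$ and $\sum_{i=0}^{n-1}(-b_{n-i}^{(n)})=1$. The internal energy $\E(\rho)=\int_{\R^d}\Psi\rho\,\d x$ is \emph{linear} in $\rho$, so $\E(\overline{\rho}^{n-1})=\sum_{i=0}^{n-1}(-b_{n-i}^{(n)})\E(\rho^i)$ exactly. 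The entropy $\S(\rho)=\int_{\R^d}\rho\log\rho\,\d x$ is convex in $\rho$ (the map $r\mapsto r\log r$ is convex on $[0,\infty)$), so Jensen's inequality applied pointwise gives $\S(\overline{\rho}^{n-1})\le\sum_{i=0}^{n-1}(-b_{n-i}^{(n)})\S(\rho^i)$. Adding the two yields \eqref{eqn:bdd-F-brho}. (One should note that $\S(\rho)$ may be $-\infty$ a priori, but since $\F(\rho^i)<\infty$ is maintained along the scheme and $\Psi\ge 0$ bounds $\E$ from below, all the quantities involved are well defined in $(-\infty,\infty]$ and the inequality makes sense.)

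For \eqref{eq: bound sum F}: sum \eqref{eqn:bdd-F-brho} over $i=1,\dots,n$ to get
\[
\sum_{i=1}^n \F(\overline{\rho}^{i-1}) \le \sum_{i=1}^n \sum_{j=0}^{i-1}(-b_{i-j}^{(i)})\F(\rho^j),
\]
then interchange the order of summation. The coefficient of $\F(\rho^0)$ is $\sum_{i=1}^n(-b_i^{(i)}) = n^{1-\alpha}$ by the first identity in Lemma \ref{lem: properties of b}. For $j=1,\dots,n-1$, the coefficient of $\F(\rho^j)$ is $\sum_{i=j+1}^n(-b_{i-j}^{(i)}) = \sum_{k=1}^{n-j}(-b_k^{(k+j)}) = 1 + (n-j)^{1-\alpha}-(n-j+1)^{1-\alpha}$ by the second identity in Lemma \ref{lem: properties of b}. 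This produces exactly the right-hand side of \eqref{eq: bound sum F}.

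The only genuine obstacle is a bookkeeping one: keeping the index shifts in the double-sum interchange consistent with the precise form $b_j^{(n)}$ in \eqref{eqn:bi} (in particular the special role of the last weight $b_n^{(n)}$), so that the two identities from Lemma \ref{lem: properties of b} apply cleanly. There is no analytic difficulty beyond Jensen's inequality and the linearity of $\E$; all terms are handled termwise in $\rho$.
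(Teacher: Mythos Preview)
Your proposal is correct and follows essentially the same approach as the paper's own proof: linearity of $\E$ plus Jensen for $\S$ to get \eqref{eqn:bdd-F-brho}, then summation and interchange of the double sum together with the two identities of Lemma~\ref{lem: properties of b} to obtain \eqref{eq: bound sum F}. The only addition is your parenthetical remark on well-definedness of the entropy terms, which the paper omits.
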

\begin{proof}
Since $\F(\overline{\rho}^{n-1})=\E(\overline{\rho}^{n-1})+\S(\overline{\rho}^{n-1})$, for the energy term $\E(\overline{\rho}^{n-1})$, we have
\begin{align*}
\E(\overline{\rho}^{n-1})& =\int_{\mathbb{R}^d} \Psi\overline{\rho}^{n-1}\d x=\int_{\mathbb{R}^d}\Psi\sum_{i=0}^{n-1}(-b_{n-i}^{(n)})\rho^i \d x \\
 &=\sum_{i=0}^{n-1}(- b_{n-i}^{(n)})\int_{\mathbb{R}^d}\Psi\rho^i \d x=\sum_{i=0}^{n-1}(-b_{n-i}^{(n)})\E(\rho^i).
\end{align*}
For the entropy term $\S(\overline{\rho}^{n-1})$: since the function $z\mapsto s(z)= z\log(z)$ is convex for $z\geq 0$
and the identity $-\sum_{i=0}^{n-1}b_{n-i}^{(n)}=1$ (cf. Lemma \ref{lem: properties of b}(i)), Jensen's inequality implies
\[
s(\overline{\rho}^{n-1})=s\Big(-\sum_{i=0}^{n-1}b_{n-i}^{(n)}\rho^i\Big)\leq \sum_{i=0}^{n-1}(-b_{n-i}^{(n)}) s(\rho^i),
\]
which, upon integration, immediately implies
\begin{equation*}
\S(\overline{\rho}^{n-1})=\int_{\mathbb{R}^d} s(\overline{\rho}^{n-1})\d x\leq \sum_{i=0}^{n-1}(-b_{n-i}^{(n)})\int_{\mathbb{R}^d} s(\rho^i)\d x =\sum_{i=0}^{n-1} (-b_{n-i}^{(n)})\S(\rho^i).
\end{equation*}
Then the preceding two estimates imply
\begin{align*}
\F(\overline{\rho}^{n-1}) & =\E(\overline{\rho}^{n-1}) + \S(\overline{\rho}^{n-1}) \\
   & \leq \sum_{i=0}^{n-1} (-b_{n-i}^{(n)})(\E(\rho^i) + \S(\rho^i))=\sum_{i=0}^{n-1} (-b_{n-i}^{(n)})\F(\rho^i).
\end{align*}
This shows the first assertion. Next, summing the inequality over $i=1$ to $i=n\leq N$, changing the
order of summation and relabeling the indices yield
\begin{align*} 
\sum_{i=1}^{n}\F(\bar{\rho}^{i-1}) & \leq \sum_{i=1}^{n}\sum_{j=0}^{i-1}(-b_{j-i}^{(j)})\F(\rho^i)= \sum_{i=0}^{n-1} \sum_{j=i+1}^n(-b_{j-i}^{(j)})\F(\rho^i)\nonumber\\
 & =\Big(\sum_{i=1}^n(-b_i^{(i)})\Big)\F(\rho^0)+\sum_{i=1}^{n-1}\Big(\sum_{j=1}^{n-i}(-b_{j}^{(j+i)})\Big) \F(\rho^i).
\end{align*}
Upon noting the identities in Lemma \ref{lem: properties of b}, we obtain the second assertion.
\end{proof}

The next result gives useful bounds on the free energy $\F(\rho^n)$ and $\F(\overline{\rho}^{n-1})$.

\begin{lemma}\label{lem:free-energy}
Suppose that $\F(\rho^0)$ is finite. Let $\{\rho^{n}\}_{n=1}^N$ be the sequence of minimizers given by
Scheme~\ref{sc: app scheme}. Then for any positive integer $1\leq n\leq N$,
\begin{align}
\F(\rho^n)&\leq \F(\rho^0)\quad \mbox{and}\quad \F(\overline{\rho}^{n-1})\leq \F(\rho^0).\label{eq:est31}
\end{align}
\end{lemma}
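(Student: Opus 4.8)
The plan is a discrete energy-dissipation argument combined with the convexity estimate \eqref{eqn:bdd-F-brho} of Lemma \ref{lem:convex} and an induction on $n$. First I would use the minimality of $\rho^n$: since $\overline{\rho}^{n-1}=\sum_{i=0}^{n-1}(-b_{n-i}^{(n)})\rho^i$ is a convex combination of probability densities in $\P_2(\R^d)$ (the iterates belong to $\P_2(\R^d)$ by \cite[Proposition 4.1]{JordanKinderlehrerOtto:1998}), it is itself an admissible competitor in \eqref{eq: minimization prob}. Testing the functional in \eqref{eq: minimization prob} with $\rho=\overline{\rho}^{n-1}$ and using $W_2(\overline{\rho}^{n-1},\overline{\rho}^{n-1})=0$ gives
\[
\frac{C_\alpha}{2\tau^\alpha}W_2^2(\rho^n,\overline{\rho}^{n-1})+\F(\rho^n)\leq \F(\overline{\rho}^{n-1}),
\]
and dropping the nonnegative Wasserstein term yields $\F(\rho^n)\leq\F(\overline{\rho}^{n-1})$.

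Next I would bound $\F(\overline{\rho}^{n-1})$ using \eqref{eqn:bdd-F-brho} together with the fact that the weights $(-b_{n-i}^{(n)})$ are nonnegative and sum to $1$ (Lemma \ref{lem: properties of b}(i)), so that
\[
\F(\overline{\rho}^{n-1})\leq \sum_{i=0}^{n-1}(-b_{n-i}^{(n)})\F(\rho^i)\leq \max_{0\leq i\leq n-1}\F(\rho^i),
\]
and hence $\F(\rho^n)\leq \max_{0\leq i\leq n-1}\F(\rho^i)$. The induction is then immediate: for $n=1$ we have $\overline{\rho}^0=\rho^0$ (since $-b_1^{(1)}=1$), so both inequalities in \eqref{eq:est31} hold; and if $\F(\rho^i)\leq\F(\rho^0)$ for all $0\leq i\leq n-1$, the two displays above give $\F(\overline{\rho}^{n-1})\leq\F(\rho^0)$ and then $\F(\rho^n)\leq\F(\rho^0)$, which closes the induction and establishes both assertions.

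The one point that requires care — and the reason the proof is not a one-line copy of the classical JKO estimate — is that $\F$ is \emph{not} nonnegative, since the entropy $\S(\rho)=\int_{\R^d}\rho\log\rho\,\d x$ can be negative; thus one cannot chain a naive monotonicity $\F(\rho^n)\le\F(\rho^{n-1})$ step by step, and must instead transport the bound through the convex combination $\overline{\rho}^{n-1}$ via the maximum, which is exactly where the normalization $\sum_{i=0}^{n-1}(-b_{n-i}^{(n)})=1$ from Lemma \ref{lem: properties of b}(i) enters. A minor secondary point is that all energies appearing above are finite (bounded below), which is part of the well-posedness of the scheme and follows from \cite[Proposition 4.1]{JordanKinderlehrerOtto:1998} via the second-moment control of the iterates; in particular this guarantees that $\overline{\rho}^{n-1}$ is a legitimate competitor and that the right-hand sides above are not $+\infty$.
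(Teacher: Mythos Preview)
Your proof is correct and follows essentially the same route as the paper: use the minimality of $\rho^n$ against the competitor $\overline{\rho}^{n-1}$ to get $\F(\rho^n)\leq\F(\overline{\rho}^{n-1})$, apply the convexity estimate \eqref{eqn:bdd-F-brho} together with the fact that the weights $(-b_{n-i}^{(n)})$ are nonnegative and sum to one, and close by induction. The only cosmetic difference is that you pass through $\max_{0\leq i\leq n-1}\F(\rho^i)$ while the paper substitutes the induction hypothesis $\F(\rho^i)\leq\F(\rho^0)$ directly into the convex combination; these are the same step.
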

\begin{proof}
Since $\rho^{n}$ is the minimizer of
problem~\eqref{eq: minimization prob} and $\overline{\rho}^{n-1}$ is an admissible density, we have
\begin{equation*}
\frac{C_\alpha}{2\tau^\alpha}W_2^2(\rho^{n},\overline{\rho}^{n-1})+\F(\rho^{n})\leq \frac{C_\alpha}{2\tau^\alpha}W_2^2(\overline{\rho}^{n-1},\overline{\rho}^{n-1})+\F(\overline{\rho}^{n-1})=\F(\overline{\rho}^{n-1}),
\end{equation*}
which implies
\begin{equation}\label{eq: bound of W2}
W_2^2(\rho^{n},\overline{\rho}^{n-1})\leq 2C_\alpha^{-1}\tau^\alpha\big(\F(\overline{\rho}^{n-1})-\F(\rho^{n})\big).
\end{equation}
It follows from this inequality and Lemma \ref{lem:convex} that
$$
\F(\rho^n)\leq \F(\overline{\rho}^{n-1})\leq \sum_{i=0}^{n-1} (-b_{n-i}^{(n)})\F(\rho^i).
$$
Then by mathematical induction, we claim $\F(\rho^n)\leq \F(\rho^0)$.
Indeed, the claim holds trivially for $n=0$. Now suppose it holds up to $n\leq N-1$, then
by the induction hypothesis and the facts that $b_{n+1-i}^{(n+1)}< 0$ for $i=0,\ldots,k$ and
$\sum_{i=0}^n(-b_{n+1-i}^{(n+1)})=1$, cf. Lemma \ref{lem: properties of b}, it follows
\begin{equation*}
  \F(\rho^{n+1}) \leq \sum_{i=0}^{n}(-b_{n+1-i}^{(n+1)})\F(\rho^i) \leq \sum_{i=0}^n(-b_{n+1-i}^{(n+1)})\F(\rho^0) = \F(\rho^0),
\end{equation*}
which shows directly the first assertion. Then the second assertion follows immediately as
$\F(\overline{\rho}^{n-1})\leq \sum_{i=0}^{n-1} (-b_{n-i}^{(n)})\F(\rho^i)\leq \F(\rho^0)
\sum_{i=0}^{n-1} (-b_{n-i}^{(n)})\leq \F(\rho^0)$, cf. Lemma \ref{lem:convex}.
\end{proof}

The next result gives a uniform bound on the second moment $M_2(\rho^n)$ of the approximation $\rho^n$,
which plays a crucial role in the convergence analysis. The proof crucially employs the property of the
relative entropy. Recall that the relative entropy $\H(\mu,\nu)$ between two probability measures $\mu$
and $\nu$ is defined by
\[
\H(\mu|\nu)=\begin{cases}
\int \log\Big(\frac{d\mu}{d\nu}\Big)d\mu~~\text{if}~~d\mu\ll d\nu\\ +\infty~~\text{otherwises}.
\end{cases}
\]
By Jensen's inequality, $\H(\mu|\nu)\geq 0$ for all $\mu$ and $\nu$. Taking $\mu\in \mathcal{P}_2(\mathbb{R}^d)$ and $\nu=Z^{-1}e^{-\frac{|x|^2}{2m}}$,
where $Z=(2\pi m)^{-\frac d2}$ is the normalization constant ($m>0$ is to be chosen), gives
\[
0\leq \H(\mu|Z^{-1}e^{-\frac{|x|^2}{2m}})=\int_{\mathbb{R}^d}\mu\log\mu\d x +\frac{1}{2m}\int_{\mathbb{R}^d} |x|^2\mu\d x+\log Z,
\]
This implies the following useful inequality
\begin{equation}
\label{entropy-moment}
-\int_{\mathbb{R}^d}\mu\log\mu\d x\leq \tfrac{1}{2m}M_2(\mu)-\tfrac{d}{2} \log(2\pi m).
\end{equation}

\begin{lemma}\label{lem:moment}
Suppose that $\F(\rho^0)$ and $M_2(\rho^0)$ are finite. Let $\{\rho^{n}\}_{n=1}^N$ be the sequence of minimizers given by
Scheme~\ref{sc: app scheme}. Then for any positive integer $1\leq n\leq N$, there holds
\begin{align}
M_2(\rho^n)&\leq C.\label{eq: est1}
\end{align}
\end{lemma}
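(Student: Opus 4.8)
The plan is to reduce the claim to the discrete fractional Gronwall inequality of Lemma~\ref{lem:Gronwall}, applied to the scalar sequence $\phi^n:=M_2(\rho^n)$. The key is to use $\varphi(x)=\tfrac12|x|^2$ as a ``test function'' in the Euler--Lagrange inequality of Corollary~\ref{cor:EL}. Since that corollary is stated only for $\varphi\in C_c^\infty(\R^d)$, I would first work with a regularization: fix a smooth nondecreasing $\zeta_R\colon[0,\infty)\to[0,\infty)$ with $\zeta_R(r)=\tfrac r2$ for $r\le R^2$, $\zeta_R$ constant for $r\ge 4R^2$, $0\le\zeta_R'\le\tfrac12$, $|\zeta_R''|\le CR^{-2}$, and set $\varphi_R(x):=\zeta_R(|x|^2)\in C_c^\infty(\R^d)$. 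One checks that $0\le\varphi_R\le\tfrac12|x|^2$, $|\nabla\varphi_R|=2|x|\zeta_R'(|x|^2)\le|x|$, and $\|\nabla^2\varphi_R\|\le C'$ with $C'$ independent of $R$. Applying Corollary~\ref{cor:EL} with $\varphi_R$ and letting $R\to\infty$ — legitimate because $\rho^n,\overline{\rho}^{n-1}\in\P_2(\R^d)$ by construction (Scheme~\ref{sc: app scheme}), so $M_2(\rho^n),M_2(\overline{\rho}^{n-1})<\infty$, and the three integrands are dominated by $\tfrac12|x|^2\rho^n$, $C(|x|^2+1)\rho^n$, and a constant times $\rho^n$, respectively — gives
\begin{equation*}
   \Big|\frac{C_\alpha}{2\tau^\alpha}\big(M_2(\rho^n)-M_2(\overline{\rho}^{n-1})\big)+\int_{\R^d}\nabla\Psi\cdot x\,\rho^n\,\d x-d\Big|\le \frac{C'}{2}\,\tau^{-\alpha}W_2^2(\rho^n,\overline{\rho}^{n-1}).
\end{equation*}

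Next I would bound the two remaining terms on the right, each by $C(1+M_2(\rho^n))$ with $C$ independent of $n$ and $\tau$. For the forcing term, Assumption~\ref{ass:Psi} yields $|\nabla\Psi\cdot x|\le C(|x|+1)|x|\le C(|x|^2+1)$, so $-\int_{\R^d}\nabla\Psi\cdot x\,\rho^n\,\d x\le C(1+M_2(\rho^n))$. For the Wasserstein term, \eqref{eq: bound of W2} gives $W_2^2(\rho^n,\overline{\rho}^{n-1})\le 2C_\alpha^{-1}\tau^\alpha\big(\F(\overline{\rho}^{n-1})-\F(\rho^n)\big)$; combining $\F(\overline{\rho}^{n-1})\le\F(\rho^0)$ from Lemma~\ref{lem:free-energy}, the bound $\E(\rho^n)\ge0$, and the entropy--moment inequality \eqref{entropy-moment} (with $m=1$), one obtains $-\F(\rho^n)\le-\S(\rho^n)\le C(1+M_2(\rho^n))$, hence $\tau^{-\alpha}W_2^2(\rho^n,\overline{\rho}^{n-1})\le C(1+M_2(\rho^n))$. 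It is important here that the displayed identity is already ``normalized by $\tau^\alpha$'', so this step costs only an $O(1)$ multiple of $M_2(\rho^n)$ and produces no blow-up as $\tau\to0$.

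Combining these estimates gives $\tfrac{C_\alpha}{2\tau^\alpha}\big(M_2(\rho^n)-M_2(\overline{\rho}^{n-1})\big)\le C(1+M_2(\rho^n))$. Since $M_2$ is linear in the measure, $M_2(\overline{\rho}^{n-1})=\sum_{i=0}^{n-1}(-b_{n-i}^{(n)})M_2(\rho^i)$, and since $b_0^{(n)}=1$ this is precisely $\bar\partial_\tau^\alpha\phi^n=C_\alpha\tau^{-\alpha}\big(M_2(\rho^n)-M_2(\overline{\rho}^{n-1})\big)$ for $\phi^n=M_2(\rho^n)$. Therefore $\bar\partial_\tau^\alpha\phi^n\le C_1+C_2\phi^n$ for all $1\le n\le N$ with $C_1,C_2>0$ independent of $n,\tau$, and Lemma~\ref{lem:Gronwall}, together with $t_n\le T$ and the monotonicity of the Mittag--Leffler function $E_\alpha$, yields $M_2(\rho^n)\le 2E_\alpha(2C_2T^\alpha)\big(M_2(\rho^0)+\tfrac{C_1}{\Gamma(1+\alpha)}T^\alpha\big)=:C$, which is exactly \eqref{eq: est1}.

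The step I expect to be delicate is the passage to the unbounded quadratic test function. The cutoff must be chosen so that, simultaneously, $\|\nabla^2\varphi_R\|$ stays uniformly bounded (otherwise the right-hand side of Corollary~\ref{cor:EL} degenerates) and the forcing integral $\int_{\R^d}\nabla\Psi\cdot\nabla\varphi_R\,\rho^n\,\d x$ passes to the limit using only $M_2(\rho^n)<\infty$, i.e., without a third-moment bound (which is not available); the choice $\varphi_R=\zeta_R(|x|^2)$ with the uniform gradient control $|\nabla\varphi_R|\le|x|$ is tailored to this. The apparent circularity caused by $M_2(\rho^n)$ reappearing on the right is harmless, since Lemma~\ref{lem:Gronwall} permits an arbitrary positive coefficient in front of $\phi^n$.
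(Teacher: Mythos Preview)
Your proposal is correct and follows essentially the same route as the paper: test Corollary~\ref{cor:EL} with (a truncation of) $\tfrac12|x|^2$, bound the forcing and Wasserstein terms by $C(1+M_2(\rho^n))$ via Assumption~\ref{ass:Psi}, \eqref{eq: bound of W2}, Lemma~\ref{lem:free-energy} and \eqref{entropy-moment}, recognize the left side as $\bar\partial_\tau^\alpha M_2(\rho^n)$, and close with Lemma~\ref{lem:Gronwall}. The paper dispenses with the cutoff and uses $\varphi=|x|^2$ directly after a short inductive argument showing $M_2(\rho^n)<\infty$; one nitpick is that your $\varphi_R$ as written is constant (not zero) at infinity and hence not in $C_c^\infty(\R^d)$, but since $\int(\rho^n-\overline{\rho}^{n-1})\,\d x=0$ this is immaterial.
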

\begin{proof}
The proof of the lemma is inspired by \cite[Lemma 3.7, (3.10)]{EberleNiethammerSchlichting:2017}.
Let $P^n$ be the optimal plan for the Wasserstein distance $W_2(\rho^n,\overline{\rho}^{n-1})$ between
$\overline{\rho}^{n-1}$ and $\rho^n$. Then by the definition of the second moment $M_2(\rho^n)$, there holds
\begin{align*}
  M_2(\rho^n) & = \int|y|^2\rho^n(\d y) = \int |y|^2P^n(\d x\d y)\\
    & \leq \int (2|y-x|^2+2 x^2)P^n(\d x \d y)\\
    & = 2W_2^2(\rho^n,\overline{\rho}^{n-1}) + 2M_2(\overline{\rho}^{n-1})\\
    & = 2W_2^2(\rho^n,\overline{\rho}^{n-1}) + 2\sum_{i=0}^{n-1}(-b_i^{(n)})M_2(\rho^i).
\end{align*}
By means of mathematical induction, this estimate, the inequality \eqref{eq: bound of W2} and the assumptions $\F(\rho^0)<\infty$
and $M_2(\rho^0)<0$ directly imply that the second moment $M_2(\rho^n)$ of each of the approximation $\rho^n$ is indeed finite.
To derive a uniform bound (with respect to $n$ and $\tau$), we estimate the ``fractional'' difference
quotient of the second moment using Corollary \ref{cor:EL} with $\varphi=|x|^2$. This choice is justified
by the finiteness of the second moment of each of the $\rho^n$:
\begin{align*}
  \frac{C_\alpha}{\tau^\alpha}(M_2(\rho^n)-M_2(\overline{\rho}^{n-1})) &
  = \frac{C_\alpha}{\tau^\alpha}\int_{\mathbb{R}^d}x^2(\rho^n-\overline{\rho}^{n-1})\d x\\
  &\leq \Big|\int_{\mathbb{R}^d}(2\nabla\Psi\cdot x-2)\rho^n(x)\d x\Big| + \tau^{-\alpha}W_2^2(\rho^n,\overline{\rho}^{n-1})\\
  &\leq 2\int_{\mathbb{R}^d}|\nabla \Psi||x|\rho^n\d x + 2 + \tau^{-\alpha}W_2^2(\rho^n,\overline{\rho}^{n-1}).
\end{align*}
Now by the growth condition \eqref{ass:Psi} on $\Psi$, we have
\begin{equation*}
  \int_{\mathbb{R}^d}|\nabla \Psi||x|\rho^n\d x \leq C(1+M_2(\rho^n)).
\end{equation*}
It follows from these two estimates and \eqref{eq: bound of W2} that
\begin{align*}
  C_\alpha\tau^{-\alpha}(M_2(\rho^n)-M_2(\overline{\rho}^{n-1})) &\leq C(1+M_2(\rho^n))
  + \tau^{-\alpha}W_2^2(\rho^n,\overline{\rho}^{n-1})\\
  &\leq C(1+M_2(\rho^n)) + C_\alpha(\F(\overline{\rho}^{n-1})-\F(\rho^n)).
\end{align*}
Next we bound the terms $\F(\overline{\rho}^{n-1})$ and $-F(\rho^n)$ on the right hand side. First, by Lemma
\ref{lem:free-energy}, $\F(\overline{\rho}^{n-1})\leq \F(\rho^0)<\infty$.
Meanwhile, since $\Psi\geq 0$ by Assumption \ref{ass:Psi}, we obtain from \eqref{entropy-moment} that
\[
-\F(\mu)=-\int_{\mathbb{R}^d} \mu\log\mu \d x-\int_{\mathbb{R}^d}\Psi\mu\d x\leq -\int_{\mathbb{R}^d} \mu\log\mu \d x\leq \tfrac{1}{2m} M_2(\mu)-\tfrac{d}{2} \log(2\pi m).
\]
Applying this inequality with $\mu=\rho^n$ and $m=1/2$ gives
\begin{equation}\label{eqn:energy-Moment}
-\F(\rho^n)\leq M_2(\rho^n)-\frac{d}{2}\log(\pi).
\end{equation}
These estimates together imply
\begin{align*}
  C_\alpha\tau^{-\alpha}(M_2(\rho^n)-M_2(\overline{\rho}^{n-1})) &\leq C(1+M_2(\rho^n)).
\end{align*}
Further, by the definition of $\overline{\rho}^{n-1}$,
\begin{equation*}
  M_2(\overline{\rho}^{n-1}) = \int_{\mathbb{R}^d}|x|^2\overline{\rho}^{n-1}\d x = \int_{\mathbb{R}^d}|x|^2\sum_{i=0}^{n-1}(-b_{n-i}^{(n)})\rho^i\d x =\sum_{i=0}^{n-1}(-b_{n-i}^{(n)}M_2(\rho^i).
\end{equation*}
Together with the definition of the L1 scheme in \eqref{eqn:L1}, it implies
\begin{equation*}
  \bar \partial_\tau^\alpha M_2(\rho^n)\leq C(1+M_2(\rho^n)).
\end{equation*}
This and the discrete Gronwall's inequality from Lemma \ref{lem:Gronwall} immediately imply the desired assertion.
\end{proof}

The next result gives a uniform bound on the entropy and energy of the approximations $\{\rho^n\}_{n=1}^N$, which induces the necessary compactness
needed in the proof of Theorem \ref{thm: main theorem}. The notation $[]_+$ denotes taking the positive part.
\begin{lemma}\label{lem:entropy}
Suppose that $\F(\rho^0)$ and $M_2(\rho^0)$ are finite. Let $\{\rho^{n}\}_{n=1}^N$ be the sequence of minimizers given by
Scheme~\ref{sc: app scheme}. Then for any positive integer $1\leq n\leq N$, there hold
\begin{align*}
\int_{\R^d}[\rho^n \log\rho^n]_+\,\d x\leq C, \quad \E(\rho^n)\leq C ,\quad
\sum_{n=1}^k W_2^2(\rho^{n},\overline{\rho}^{n-1})\leq C\tau^\alpha. 
\end{align*}
\end{lemma}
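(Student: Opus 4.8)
All three bounds will be read off from the a priori estimates already in hand: $\F(\rho^n)\le\F(\rho^0)$ and $\F(\overline{\rho}^{n-1})\le\F(\rho^0)$ (Lemma~\ref{lem:free-energy}), $M_2(\rho^n)\le C$ (Lemma~\ref{lem:moment}), the entropy--moment inequality~\eqref{entropy-moment}, and $\E(\rho)=\int_{\R^d}\Psi\rho\,\d x\ge 0$ under Assumption~\ref{ass:Psi}. First I would record a uniform \emph{lower} bound on the free energy: applying~\eqref{entropy-moment} with a fixed $m$ together with Lemma~\ref{lem:moment},
\[
-\F(\rho^n)\le-\S(\rho^n)=-\int_{\R^d}\rho^n\log\rho^n\,\d x\le\tfrac{1}{2m}M_2(\rho^n)-\tfrac{d}{2}\log(2\pi m)\le C ,
\]
so that $\F(\rho^n)\ge -C$ and, since $\E(\rho^n)\ge 0$, also $\S(\rho^n)\ge -C$, all uniformly in $n$ and $\tau$.

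Given this, the energy bound is immediate: $\E(\rho^n)=\F(\rho^n)-\S(\rho^n)\le\F(\rho^0)+\bigl(-\S(\rho^n)\bigr)\le\F(\rho^0)+C\le C$. For the positive part of the entropy I would write $\int_{\R^d}[\rho^n\log\rho^n]_+\,\d x=\S(\rho^n)+\int_{\R^d}[\rho^n\log\rho^n]_-\,\d x$; since $\E(\rho^n)\ge 0$ we have $\S(\rho^n)\le\F(\rho^n)\le\F(\rho^0)$, so only $\int_{\R^d}[\rho^n\log\rho^n]_-\,\d x=\int_{\{\rho^n<1\}}\rho^n\log(1/\rho^n)\,\d x$ needs work. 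I would fix a Gaussian $\gamma(x)=(2\pi m)^{-d/2}e^{-|x|^2/(2m)}$ with $m$ large enough that $\gamma<e^{-1}$ on all of $\R^d$, and split $\{\rho^n<1\}$ into $\{\gamma\le\rho^n\}$ and $\{\rho^n<\gamma\}$: on the first set $\log(1/\rho^n)\le\log(1/\gamma)=\tfrac d2\log(2\pi m)+\tfrac{|x|^2}{2m}$, so this part contributes at most $C\bigl(1+M_2(\rho^n)\bigr)$; on the second set $\rho^n<\gamma<e^{-1}$, and monotonicity of $t\mapsto t\log(1/t)$ on $(0,e^{-1})$ gives $\rho^n\log(1/\rho^n)\le\gamma\log(1/\gamma)$, whose integral is a finite constant depending only on $m$ and $d$ (it equals $\tfrac d2\log(2\pi m)+\tfrac d2$). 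Hence $\int_{\R^d}[\rho^n\log\rho^n]_-\,\d x\le C\bigl(1+M_2(\rho^n)\bigr)\le C$ by Lemma~\ref{lem:moment}, and therefore $\int_{\R^d}[\rho^n\log\rho^n]_+\,\d x\le C$.

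For the summed Wasserstein estimate I would start from the minimality of $\rho^n$ tested against the competitor $\overline{\rho}^{n-1}$, i.e.\ $\frac{C_\alpha}{2\tau^\alpha}W_2^2(\rho^n,\overline{\rho}^{n-1})+\F(\rho^n)\le\F(\overline{\rho}^{n-1})$, which is~\eqref{eq: bound of W2}, and sum over $n=1,\dots,k$ to obtain
\[
\sum_{n=1}^k W_2^2(\rho^n,\overline{\rho}^{n-1})\le 2C_\alpha^{-1}\tau^\alpha\Bigl(\sum_{n=1}^k\F(\overline{\rho}^{n-1})-\sum_{n=1}^k\F(\rho^n)\Bigr).
\]
The plan is then to control the bracket. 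I would bound $\sum_{n=1}^k\F(\overline{\rho}^{n-1})$ from above by the right-hand side of~\eqref{eq: bound sum F} of Lemma~\ref{lem:convex}; the weights $k^{1-\alpha}$ and $1+(k-i)^{1-\alpha}-(k-i+1)^{1-\alpha}$ that appear there are precisely the ones generated by the telescoping identities for the L1 weights $b_i^{(n)}$ in Lemma~\ref{lem: properties of b}, so after subtracting $\sum_{n=1}^k\F(\rho^n)$ the terms recombine into a sum of free-energy increments $\F(\rho^0)-\F(\rho^i)$ weighted by $(k-i+1)^{1-\alpha}-(k-i)^{1-\alpha}$, plus the single term $-\F(\rho^k)$. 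Bounding each increment from above by $\F(\rho^0)-\F(\rho^i)\le\F(\rho^0)+C$ (using $\F(\rho^i)\ge -C$ from the first paragraph) and using $-\F(\rho^k)\le C$ then closes the estimate and gives the stated $C\tau^\alpha$ bound. The hard part is exactly this combinatorial bookkeeping: because the scheme replaces $\rho^{n-1}$ by the convex combination $\overline{\rho}^{n-1}$ of \emph{all} past iterates, the clean telescoping $\sum_n\bigl(\F(\rho^{n-1})-\F(\rho^n)\bigr)=\F(\rho^0)-\F(\rho^k)$ used in~\cite{JordanKinderlehrerOtto:1998} is no longer available, and one has to keep careful track of the signs and partial sums of the weights $b_i^{(n)}$ via Lemma~\ref{lem: properties of b} to extract the cancellation.
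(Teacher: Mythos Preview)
Your arguments for the first two bounds are correct and essentially the same in spirit as the paper's: the paper simply quotes the estimates $S(\rho)\ge -C(M_2(\rho)+1)^\gamma$ and $\int[\rho\log\rho]_-\,\d x\le C(M_2(\rho)+1)^\gamma$ from \cite{JordanKinderlehrerOtto:1998}, whereas you reprove them via an explicit Gaussian comparison; either way one combines them with $\F(\rho^n)\le\F(\rho^0)$ and $M_2(\rho^n)\le C$.

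There is, however, a genuine gap in your argument for the summed Wasserstein estimate. Your recombination is (up to a harmless constant) correct: the bracket equals
\[
\sum_{i=1}^{k} w_i\bigl(\F(\rho^0)-\F(\rho^i)\bigr),\qquad w_i=(k-i+1)^{1-\alpha}-(k-i)^{1-\alpha}>0,
\]
with $\sum_{i=1}^k w_i=k^{1-\alpha}$. But now bounding each increment by the constant $\F(\rho^0)+C$ only yields
\[
\sum_{n=1}^k W_2^2(\rho^n,\overline{\rho}^{n-1})\le C\,\tau^\alpha k^{1-\alpha}\le C\,T^{1-\alpha}\tau^{2\alpha-1},
\]
which is \emph{not} $O(\tau^\alpha)$: it is strictly weaker for $\alpha\in(\tfrac12,1)$ and blows up as $\tau\to0$ for $\alpha\le\tfrac12$. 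The growth of $\sum_i w_i=k^{1-\alpha}$ cannot be absorbed by a crude uniform bound on the increments.

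The paper's fix is precisely to avoid this crude bound. Instead of replacing each $-\F(\rho^i)$ by a fixed constant, one keeps the parameter $m$ in the entropy--moment inequality~\eqref{entropy-moment} free: $-\F(\rho^i)\le\tfrac{1}{2m}M_2(\rho^i)-\tfrac{d}{2}\log(2\pi m)\le\tfrac{M}{2m}-\tfrac{d}{2}\log(2\pi m)$ with $M=\sup_i M_2(\rho^i)$. After summing, the bracket is bounded by
\[
k^{1-\alpha}\Bigl[\F(\rho^0)+\tfrac{M}{2m}-\tfrac{d}{2}\log(2\pi m)\Bigr]+\text{(bounded terms)}-\F(\rho^k).
\]
Since $g(m):=\F(\rho^0)+\tfrac{M}{2m}-\tfrac{d}{2}\log(2\pi m)$ runs continuously from $+\infty$ to $-\infty$ on $(0,\infty)$, one can choose $m=m^*$ with $g(m^*)=0$, killing the $k^{1-\alpha}$ coefficient exactly. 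What remains is uniformly bounded, and the stated $C\tau^\alpha$ follows. This cancellation is the missing idea in your proposal.
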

\begin{proof}
These estimates are analogous to (43), (44) and (45) in \cite{JordanKinderlehrerOtto:1998}.
According to \cite[Equations (14)-(15)]{JordanKinderlehrerOtto:1998},
there exist $0<\gamma<1$ and $C<\infty$ such that for all $\rho\in\mathcal{P}_2(\mathbb{R}^d)$
\begin{equation}
\label{eq: S-M estimates}
S(\rho)\geq -C(M_2(\rho)+1)^\gamma\quad\text{and}\quad \int_{\R^d}|\min\{\rho\log\rho,0\}|\,dx \leq C(M_2(\rho)+1)^\gamma.
\end{equation}
Now the first two estimates follow directly from \eqref{eq: S-M estimates} and \eqref{eq: est1},
and Lemmas \ref{lem:free-energy} and \ref{lem:moment} as
\begin{align*}
\int \max\{\rho^n\log\rho^n,0\}\,dx&\leq S(\rho^n)+\int_{\R^d}|\min\{\rho^n\log\rho^N,0\}|\,dx
\\&\leq S(\rho^n)+C(M_2(\rho^n)+1)^\gamma
\\& \leq \F(\rho^n)+C(M_2(\rho^n)+1)^\gamma\leq C,\\
\E(\rho^k)&=\F(\rho^n)-\S(\rho^n)
\\&\leq \F(\rho^n)+C(M_2(\rho^n)+1)^\gamma \leq C.
\end{align*}
It remains to prove the last estimate. By summing \eqref{eq: bound of W2} over $n$ and using the inequality
\eqref{eq: bound sum F}, we obtain
\begin{align}\label{eq: bound sum W2}
&\quad \sum_{i=1}^{n}W_2^{2}(\rho^{i},\bar{\rho}^{i-1})\leq \frac{2\tau^\alpha}{C_\alpha}\sum_{i=1}^{n}\big(\F(\bar{\rho}^{i-1})-\F(\rho^{i})\big)\nonumber\\
&\leq \frac{2\tau^\alpha}{C_\alpha}\left[n^{1-\alpha}\F(\rho^0)+\sum_{i=1}^{n-1}\Big((n-i)^{1-\alpha}-(n-i+1)^{1-\alpha}\Big)\F(\rho^i)-\F(\rho^n)\right].
\end{align}
Next we bound the summation in the square bracket. The inequality \eqref{entropy-moment} (with $m$ to be chosen below) implies
\begin{align*}
&\quad\sum_{i=1}^{n-1}\Big((n-i)^{1-\alpha}-(n-i+1)^{1-\alpha}\Big)\F(\rho^i)\\
&=\sum_{i=1}^{n-1}\Big((n-i+1)^{1-\alpha}-(n-i)^{1-\alpha}\Big)(-\F(\rho^i))\\
&\leq \sum_{i=1}^{n-1}\big((n-i+1)^{1-\alpha}-(n-i)^{1-\alpha}\big)\Big(\frac{1}{2m}M_2(\rho^i)-\frac{d}{2}\log(2\pi m)\Big)\\
&\leq\frac{M}{2m}\sum_{i=1}^{n-1}\big((n-i+1)^{1-\alpha}-(n-i)^{1-\alpha}\big)-\frac{d}{2}\log(2\pi m)(n^{1-\alpha}-1)\\
&=n^{1-\alpha}\Big[\frac{M}{2m}-\frac{d}{2}\log(2\pi m)\Big]+\frac{d}{2}\log(2\pi m)-\frac{M}{2m},
\end{align*}
where $M>0$ is an upper bound of $M_2(\rho^i)$ for all $i=1,\ldots,n-1$ derived in Lemma \ref{lem:moment}. Consequently,
\begin{align}\label{bound sumF}
&n^{1-\alpha}\F(\rho^0)+\sum_{i=1}^{n-1}\Big((n-i)^{1-\alpha}-(n-i+1)^{1-\alpha}\Big)\F(\rho^i)\notag\\
\leq&n^{1-\alpha}\Big[\F(\rho^0)+\frac{M}{2m}-\frac{d}{2}\log(2\pi m)\Big]+\frac{d}{2}\log(2\pi m)-\frac{M}{2m}.
\end{align}
It suffices to bound the right hand side uniformly with respect to $n$. To this end,
let $g:(0,+\infty)\to \mathbb{R}$ be defined by $g(m):=\F(\rho^0)+\frac{M}{2m}-\frac{d}{2}\log(2\pi m)$. Then, simple
computation shows $g'(m)=-\frac{M}{2m^2}-\frac{d}{2m}<0$, $\lim_{m\rightarrow 0^+}
g(m)=+\infty$ and $\lim_{m\rightarrow +\infty}g(m)=-\infty$. Thus, the equation $g(m)=0$ has a unique solution
$m^*\in (0,+\infty)$ that depends only on $\F(\rho^0)$, $d$ and $M$. Choosing $m=m^*$ in \eqref{bound sumF} gives
$$
n^{1-\alpha}\F(\rho^0)+\sum_{i=1}^{n-1}\Big((n-i)^{1-\alpha}-(n-i+1)^{1-\alpha}\Big)\F(\rho^i)\leq \frac{d}{2}\log(2\pi m^*)-\frac{M}{2m^*}=\F(\rho^0).
$$
This estimate, \eqref{eq: bound sum W2} and \eqref{eqn:energy-Moment} together imply
\begin{align*}
\sum_{i=1}^n W^2_2(\rho^i,\bar{\rho}^{i-1})&\leq C\tau^\alpha \left[ n^{1-\alpha}\F(\rho^0)+\sum_{i=1}^{n-1}\Big((n-i)^{1-\alpha}-(n-i+1)^{1-\alpha}\Big)\F(\rho^i)-\F(\rho^n)\right]\\
&\leq C\tau^\alpha\Big[\F(\rho^0)+M(\rho^n)-\frac{d}{2}\log(\pi)\Big] \leq C\tau^\alpha,
\end{align*}
where the last step follows from Lemma \ref{lem:moment}. This completes the proof of the lemma.
\end{proof}

Now we can state the proof of Theorem \ref{thm: main theorem}.
\begin{proof}[Proof of Theorem \ref{thm: main theorem}]
The proof follows the strategy in~\cite{JordanKinderlehrerOtto:1998,Huang:2000,DuongPeletierZimmer:2014}.
The key idea is to pass to the limit
$\tau\to 0^+$ in the Euler-Lagrange equation for the sequence of minimizers~\eqref{eq: EL eqn}
proved in Lemma~\ref{lem: EL eqn}. The \textit{a priori} estimates in
Lemmas~\ref{lem:moment} and \ref{lem:entropy} provide necessary compactness properties that allow
us to extract a convergent subsequence.

Let $T>0$ be a given final time. For each fixed $\tau>0$, let $\{\rho^n\}_{n=1}^N$ be the sequence
of minimizers given by Scheme \ref{sc: app scheme} and let $t\mapsto \rho_\tau(t)$ be the
approximation defined in~\eqref{eq: interpolation}. By Lemmas~\ref{lem:moment} and \ref{lem:entropy}, we have
\begin{align}
M_2(\rho_\tau(t))+\int_{\mathbb{R}^{d}}[\rho_\tau(t)\log \rho_\tau(t)]^+\d x&\leq C,\qquad\text{for all} \quad 0\leq t \leq T. \label{sumM2Entropy}
\end{align}
Since the function $z\mapsto [z\log z]^+$ has super-linear growth, the bound \eqref{sumM2Entropy} and
Dunford-Pettis theorem \cite{Yosida:1980} ensure that there exists a subsequence, denoted again by $\rho_\tau$,
and some $\rho\in L^1((0,T)\times\R^{d})$ such that
\begin{equation}
\rho_\tau\rightarrow \rho ~\text{ weakly in }~ L^1((0,T)\times\R^{d}).
\end{equation}
It remains to show that the limit $\rho$ satisfies the weak formulation~\eqref{eq: weak formulation}
of problem~\eqref{eq:fractionalFP} in the sense of Definition \ref{def: weak sol}. Fix any test
function $\varphi\in C_c^\infty((-\infty,T)\times \R^{d})$. Let $P^n\in\P(\overline{\rho}^{n-1},\rho^n)$ be the
optimal plan for $W_2(\overline{\rho}^{n-1},\rho^n)$. For any $0<t<T$, we have
\begin{align}
&\quad\int_{\mathbb{R}^{d}}\big[\rho^n(x)-\overline{\rho}^{n-1}(x)\big]\,\varphi(t,x)\d x\nonumber\\
&=\int_{\mathbb{R}^{d}}\rho^n(y)\varphi(t,y)\d y-\int_{\mathbb{R}^{d}}\overline{\rho}^{n-1}(x)\varphi(t,x)\d x\nonumber\\
&=\int_{\mathbb{R}^{2d}}\big[\varphi(t,y)-\varphi(t,x)\big]\,P^n(\d x \d y) \nonumber
\\&=\int_{\mathbb{R}^{2d}}(y-x)\cdot\nabla \varphi(t,y)P^n(\d x\d y) +\varepsilon_n,\label{timederivativeappro}
\end{align}
where in the last line, we have used Taylor expansion of $\varphi$. The error term $\varepsilon_n$ depends
on $t$ through time-dependence of $\varphi$ and can be bounded by
\begin{align}\label{epsilon}
|\varepsilon_n(t)|&\leq C\int_{\mathbb{R}^{2d}}|y-x|^2\,P^n(\d x\d y)\leq C W_2^2(\overline{\rho}^{n-1},\rho^n).
\end{align}
From Lemma \ref{lem: EL eqn} and the identity \eqref{timederivativeappro}, we obtain
\begin{equation}
\frac{C_\alpha}{\tau^\alpha}\int_{\mathbb{R}^{d}}\big[\rho^n(x)-\overline{\rho}^{n-1}(x)\big]\,\varphi(t,x)\d x=\int_{\mathbb{R}^d}(-\nabla\Psi\cdot\nabla\varphi+\Delta\varphi)\rho^n(x)\,\d x-\frac{C_\alpha}{\tau^\alpha}\varepsilon_n,
\end{equation}
which, upon integrating with respect to $t$ from $t_{n-1}$ to $t_n$, yields
\begin{align*}
&\quad\frac{C_\alpha}{\tau^\alpha}\int_{t_{n-1}}^{t_n}\int_{\mathbb{R}^d}[\rho^n(x)-\overline{\rho}^{n-1}(x)]\varphi(t,x)\,\d x\,\d t\nonumber\\
&=\int_{t_{n-1}}^{t_n}\int_{\mathbb{R}^d}(-\nabla\Psi\cdot\nabla\varphi+\Delta\varphi)\,\rho^n(x)\,\d x\d t-\frac{C_\alpha}{\tau^\alpha}\int_{t_{n-1}}^{t_n}\varepsilon_n\,\d t\notag
\\&=\int_{t_{n-1}}^{t_n}\int_{\mathbb{R}^d}(-\nabla\Psi\cdot\nabla\varphi+\Delta\varphi)\,\rho_\tau(t,x)\,\d x\,\d t-\frac{C_\alpha}{\tau^\alpha}\int_{t_{n-1}}^{t_n}\varepsilon_n\,\d t,
\end{align*}
where the last line follows from the definition of the piecewise constant interpolation $\rho_\tau(t,x)$.
Summing the last identity from $n=1$ to $N$ gives
\begin{multline}\label{eq: sum 1}
\sum_{n=1}^{N}\frac{C_\alpha}{\tau^\alpha}\int_{t_{n-1}}^{t_n}\int_{\mathbb{R}^d}[\rho^n(x)-\overline{\rho}^{n-1}(x)]\varphi(t,x)\,\d x\,\d t\\
=\int_0^T\int_{\mathbb{R}^d}(-\nabla\Psi\cdot\nabla\varphi+\Delta\varphi)\,\rho_\tau(t,x)\,\d x\,\d t+e_\tau,
\end{multline}
where the term $e_\tau$ is given by
\begin{equation}
\label{eq: error1}
e_\tau=-\frac{C_\alpha}{\tau^\alpha}\sum_{n=1}^{N}\int_{t_{n-1}}^{t_n}\varepsilon_n\,\d t.
\end{equation}
Now recall that by the definition $\overline\rho^{n-1}$ of Scheme \ref{sc: app scheme},
$\overline{\rho}^{n-1}=\sum_{i=0}^{n-1}(-b_{n-i}^{(n)})\rho^i$. This and the definition of the L1
approximation in \eqref{eqn:L1}, we can rewrite the left hand side of the identity \eqref{eq: sum 1} as
\begin{align*}
&\sum_{n=1}^{N}\frac{C_\alpha}{\tau^\alpha}\int_{t_{n-1}}^{t_n}\int_{\mathbb{R}^d}[\rho^n(x)-\overline{\rho}^{n-1}(x)]\varphi(t,x)\,\d x\,\d t\\
=&\sum_{n=1}^{N}\frac{C_\alpha}{\tau^\alpha}\int_{t_{n-1}}^{t_n}\int_{\mathbb{R}^d}\Big[\rho^n(x)+\sum_{i=0}^{n-1}b_{n-i}^{(n)}\rho^i(x)\Big]\varphi(t,x)\,\d x\,\d t\\
=& \sum_{n=1}^N\int_{t_{n-1}}^{t_n}\int_{\mathbb{R}^d}\bar\partial_\tau^\alpha \rho^n(x)\varphi(t,x)\,\d x\,\d t.
\end{align*}
By Lemma \ref{lem:int-by-part-semi}, we obtain
\begin{align*}
  &\sum_{n=1}^N \int_{t_{n-1}}^{t_n}\int_{\mathbb{R}^d}(\bar \partial_\tau^\alpha \rho^n)(t,x) \varphi(t,x)\d t\d x\\
  =& \int_{\mathbb{R}^d}\int_0^T \rho_\tau(t,x) {\overline{D}_\tau^\alpha \varphi(t,x)}\d t\d x  + C_\alpha\tau^{-\alpha}\int_{\mathbb{R}^d}\rho^0(x)\Big(\sum_{n=1}^Nb_n^{(n)}\int_{t_{n-1}}^{t_n} \varphi(t,x)\d t\Big)\d x.
\end{align*}
Now by Theorem \ref{thm:err-approx}, the following two limits hold
\begin{align*}
  \lim_{\tau\to 0^+} \overline{D}_\tau^\alpha\varphi (t) & = {_tD_T^\alpha}\varphi(t),\\
  \lim_{\tau\to 0^+} C_\alpha\tau^{-\alpha}\sum_{n=1}^N b_n^{(n)} \int_{t_{n-1}}^{t_n} \varphi(t)\d t &= - ({_tI_T^\alpha}\varphi)(0) .
\end{align*}
Thus, upon passing to limit $\tau\to0^+$ and noting the weak convergence of the sequence $\rho_\tau$ to $\rho$ in $L^1(\Omega)$,
we deduce
\begin{align*}
  &\lim_{\tau\to0^+} \sum_{n=1}^{N}\frac{C_\alpha}{\tau^\alpha}\int_{t_{n-1}}^{t_n}\int_{\mathbb{R}^d}[\rho^n(x)-\bar{\rho}^{n-1}(x)]\varphi(t,x)\,\d x\,\d t\\
  =& \lim_{\tau\to0^+} \int_{\mathbb{R}^d}\int_0^T \rho_\tau(t,x) {\overline{D}_\tau^\alpha \varphi(t,x)}\d t\d x  + C_\alpha\tau^{-\alpha}\int_{\mathbb{R}^d}\rho^0(x)\Big(\sum_{n=1}^Nb_n^{(n)}\int_{t_{n-1}}^{t_n} \varphi(t,x)\d t\Big)\d x\\
  =& \int_{\mathbb{R}^d}\int_0^T \rho(t,x) {_tD_T^\alpha \varphi(t,x)}\d t\d x  - \frac{1}{\Gamma(1-\alpha)} \int_{\mathbb{R}^d}\rho^0(x)\int_0^T t^{-\alpha}\varphi(t,x)\d t\d x.
\end{align*}
Meanwhile, for the first term on the right-hand side of \eqref{eq: sum 1}, using the weak convergence of $\rho_\tau$ to $\rho$ in
$L^1((0,T)\times\mathbb{R}^d)$, we obtain
$$
\lim_{\tau\to0^+}\int_0^T\int_{\mathbb{R}^d}(-\nabla\Psi\cdot\nabla\varphi+\Delta\varphi)\,\rho_\tau(t,x)\,\d x\d t=\int_0^T\int_{\mathbb{R}^d}(-\nabla\Psi\cdot\nabla\varphi+\Delta\varphi)\,\rho(t,x)\,\d x\d t.
$$
It remains to consider the error term $e_\tau$. Actually, by the estimates \eqref{eq: error1} and \eqref{epsilon}, we have
\begin{align*}
|e_\tau|&\leq\frac{C_\alpha}{\tau^\alpha}\sum_{n=1}^{N}\int_{t_{n-1}}^{t_n}|\varepsilon_n(t)|\,\d t{\leq}C\tau^{-\alpha} \sum_{n=1}^{N}\int_{t_{n-1}}^{t_n}W_2^2(\bar{\rho}^{n-1},\rho^n)\,\d t\\
&\leq C\tau^{1-\alpha}\sum_{n=1}^NW_2^2(\bar{\rho}^{n-1},\rho^n)\leq C\tau,
\end{align*}
where the last step is due to the bound on $\sum_{n=1}^NW_2^2(\bar{\rho}^{n-1},\rho^n)$ from Lemma \ref{lem:entropy}.
This inequality implies that $e_\tau\rightarrow 0$ as $\tau\rightarrow 0^+$. Therefore, taking the
limit $\tau\rightarrow 0^+$, we deduce that the limiting density $\rho(t,x)$
satisfies
\begin{align*}
&\int_0^T\int_{\mathbb{R}^d}\rho(t,x){_tD_T^\alpha}\varphi(t,x)\d t\d x-\frac{1}{\Gamma(1-\alpha)}\int_0^Tt^{-\alpha}\int_{\mathbb{R}^d}\rho_0(x)\varphi(t,x)\d x\,\d t\\
=&\int_0^T\int_{\mathbb{R}^d}(-\nabla\Psi\cdot\nabla\varphi+\Delta\varphi)\,\rho(t,x)\,\d x\,\d t
\end{align*}
which is precisely the weak formulation \eqref{eq: weak formulation} of \eqref{eq:fractionalFP} in Definition
\ref{def: weak sol}. This completes the proof of the theorem.
\end{proof}

\section{Numerical experiments}\label{sec:numer}

The classical JKO scheme may be employed as a time-stepping scheme for solving FPEs \cite{KinderlehrerWalkington1999,
AguelBowles2013}, although not extensively studied due to relatively high computational cost associated with the
Wasserstein distance. Following the setting in \cite{KinderlehrerWalkington1999}, we illustrate the fractional scheme
\ref{sc: app scheme} with the following time-fractional FPE:
\begin{equation}
\label{eq: 1D model}
\partial_t^\alpha u-\nabla\cdot(\nabla u+ \nabla \Psi u)=0\quad\text{in}~~\Omega,
\end{equation}
subject to the following initial and boundary conditions
\begin{equation}\label{eq: IBCs}
u\big\vert_{t=0}=u_0\geq 0\quad\text{and}\quad (\nabla u+\nabla \Psi u)\cdot \nu =0~~\text{on}~~\partial\Omega,
\end{equation}
where $\nu$ is the unit outward normal direction, and $u_0$ is assumed to be a probability density in $\Omega$, i.e., $\int_{\Omega}u_0(x)\d x=1$.

\subsection{Implementation details}
Based on Scheme \ref{sc: app scheme}, we employ the following time semi-discrete approximation: Let $u^0:=u_0$, and
for $n\geq 1$, define $u^{n}$ to be the unique minimizer over $\mathcal{A}:=\big\{u:\Omega\rightarrow\infty:
u\in L^1(\Omega)~\text{and}~\int_{\Omega}u(x)\d x=\int_{\Omega} u^{n}(x) \d x\big\}$ of the following functional
\begin{equation}\label{eq: I}
J(u):=\frac{C_\alpha}{2\tau^\alpha} W_2^2(u,\overline{u}^{n-1})+\int_{\Omega} (u\log u+ \Psi u)\d x,\quad \mbox{with } \overline{u}^{n-1}=\sum_{i=0}^{n-1}(-b^{(n)}_{n-i})u^i.
\end{equation}

Next we describe a spatial discretization of the function $J(u)$ for the one-dimensional case
$\Omega=(0,1)$, and the discretization is similarly for the high-dimensional case, provided that
one can have a regular decomposition (e.g., triangulation) of the domain $\Omega$. The interval $\Omega=[0,1]$ is discretized into
subintervals $[x_i, x_{i+1}]$, where $x_i=ih$ and $h=1/M$ denotes the mesh size and $M\in \mathbb{N}_+$. Similarly,
the time interval $[0,T]$ is discretized as into $[t_n,t_{n+1}]$, where $t_n=n\tau$, $n=0,1,\ldots,$ and $\tau=T/N$
denotes the time-step size and $M\in\mathbb{N}_+$. Following \cite{KinderlehrerWalkington1999,AguelBowles2013},
we approximate the solutions $u^n$ with spatially piecewise constant functions. The initial data $u_0:\Omega\rightarrow \mathbb{R}$ and the forcing $\Psi:
\Omega\rightarrow \mathbb{R}$ are taken to be piecewise constant functions whose values coincide with their
function values at the midpoint $x_{i+\frac12}=x_i+\frac{h}{2}$, i.e., $u_0$ by the sequence $\mathbf{u}^0=(u_{0i})_{i=0}^{M-1}\in\mathbb{R}^M$,
with $u_{0i}=u_0(x_{i+\frac12})$, and similarly for $\Psi$. Accordingly, the integral of a function
$f:\Omega\rightarrow \mathbb{R}$ over the domain $\Omega$ is approximate by the following midpoint quadrature:
\begin{equation*}
\int_{\Omega}f(x)\,\d x\simeq h\sum_{i=0}^{M-1}f_i.
\end{equation*}
By absorbing the mesh size $h$ into the probability, i.e., $u_i=\int_{x_{i}}^{x_{i+1}}u\d x$, then $\mathbf{u}\in\mathbb{R}^M$
belongs to the discrete probability space. The discrete analogue $J_h(\mathbf{u})$ of the functional $J(u)$ is given by
\begin{equation}\label{eqn:JKO-dis}
    J_h(\mathbf{u})=\frac{C_\alpha}{2\tau^\alpha} W_2^2(\mathbf{u},\mathbf{\overline{u}}^{n-1})+  \langle\mathbf{u},\mathbf{\log u}+\boldsymbol{\psi}\rangle,\quad \mbox{with } \overline{\mathbf{u}}^{n-1}=\sum_{i=0}^{n-1}(-b^{(n)}_{n-i})\mathbf{u}^i,
\end{equation}
where $\log$ (and exponential) of a vector is understood componentwise, $\langle\cdot,\cdot\rangle$ denotes the
usual Euclidean inner product on $\mathbb{R}^M$ (or $\mathbb{R}^{M\times M}$). The minimization is over the probability simplex
$\Sigma_M=\{\mathbf{u}\in\mathbb{R}^M: u_i\geq 0, \sum_{i=0}^{M-1}u_i=1\}$. In the functional, we
have dropped the constant independent of $\log \mathbf{u}$, since it does not affect the minimization.

The discrete functional $J_h(\mathbf{u})$ involves the Wasserstein distance $W_2^2(\mathbf{u},\mathbf{\overline{u}}^{n-1})$,
and thus its efficient minimization is nontrivial, which has restricted the computation of Wasserstein gradient flow
traditionally to the one spatial dimensional case, for which the Wasserstein distance can be computed explicitly via an
inverse cumulative distribution function \cite{KinderlehrerWalkington1999}. Nonetheless, over the past few years, the
computation of Wasserstein distance has witnessed significant progress, especially within the computer vision and
machine learning communities; see the monograph \cite{PeyreCuturi:2019} for an up-to-date account. In the numerical
experiments below, we employ the Dykstra algorithm given in Peyre \cite{Peyre:2015} for each JKO time-stepping. It is based on entropic approximation
of Wasserstein distance \cite{Cuturi:2013}, and easily extended to the multi-dimensional case, when compared with the
relaxation algorithm and projected gradient descent employed in \cite{KinderlehrerWalkington1999,AguelBowles2013}. We describe the
whole computational procedure for minimizing $J_h(\mathbf{u})$ in Appendix \ref{app:JKO-stepping} for the convenience of readers.
Note that the Wasserstein distance can also be approximated using the entropic regularization, leading to Sinkhorn algorithm
\cite{Cuturi:2013}. This algorithm is employed below to compute the error in Wasserstein distance approximately. In the computation, the
(crucial) relaxation parameter $\gamma$ in the algorithms is fixed at $1/N$, where $N$ is the number of time steps.

\subsection{Numerical results and discussions}

Now we present some numerical results. First we consider the one-dimensional case.
\begin{example}
The domain $\Omega=(0,1)$, the initial condition $u_0(x)=1$
in $\Omega$, and the forcing $\Psi$ is given by $\Psi(x)=x$ or $\Psi(x)=\frac12x^2$.
\end{example}

The numerical results are given in Tables \ref{tab:1d-x} and \ref{tab:1d-x2}, respectively, at the time $T=1$,
for the forcing $\Psi(x)=x$ and $\Psi(x)=\frac12x^2$, where the $L^1(\Omega)$ and $L^2(\Omega)$ error of the
numerical solutions with respect to the reference solution, which is computed on a much finer temporal grid
with a time step size $\tau=1/1280$. Note that the $L^1(\Omega)$ metric was employed in the prior studies
\cite{KinderlehrerWalkington1999,AguelBowles2013}, whereas the $L^2(\Omega)$ metric is very common in numerical
analysis \cite{JinLazarovZhou:2019}. In addition, we also present the error in the Wasserstein distance (indicated by $W$ in the tables),
computed using Sinkhorn algorithm \cite{Cuturi:2013}. The results show that the scheme is convergent in either norm, with
the convergence rate in the $L^1$ norm slightly higher than that for the $L^2$ norm. The convergence rate
is consistently observed to be sublinear for all fractional orders, and it is slower than the first-order
convergence of the standard implementation of the L1 scheme (implemented with the Galerkin in space)
\cite{JinLazarovZhou:2016ima}. Surprisingly, the convergence rate deteriorates as the fractional order $\alpha$
increases, however, the precise mechanism of the loss remains elusive. In sharp contrast, the convergence
in Wasserstein distance is rather stable with respect to the fractional order $\alpha$. The empirical rate is at
$0.47$, which is slower than the optimal first-order rate for the classical JKO scheme (under suitable conditions)
\cite[Theorem 4.0.4]{AmbrosioGigliSavare:2008}; see also \cite{ClementDesch:2010} and \cite[Theorem 2.7]{Craig:2016}
for a convergence rate $O(\tau^\frac14)$ and $O(\tau^\frac12)$, respectively. In view of these empirical observations, it is
of enormous interest to rigorously derive sharp convergence rate in the fractional case. Note that for the two forcing terms, the convergence
behavior of the scheme is very similar to each other; see Fig. \ref{fig:density-t=1} for the density function at
$T=1$. Qualitatively, the plots also indicate that the convergence speed to the equilibrium differs significantly
with the fractional order $\alpha$, as recently established by Kemppainen and Zacher \cite{Kemppainen2019}, i.e.,
the smaller is the fractional order $\alpha$, the slower is the convergence to the equilibrium.

\begin{table}[hbt!]
\centering
\caption{Numerical results for the forcing $\Psi(x)=x$.\label{tab:1d-x}}
\begin{tabular}{r|cccccc}
\hline
 $\alpha\backslash N$& 20 & 40 & 80 & 160 & 320 & rate \\
\hline
      $L^1$ & 2.22e-2 &  1.41e-2 &  8.85e-3 &  5.40e-3 &  2.98e-3  &  0.72\\
$0.6$ $L^2$ & 3.22e-2 &  2.18e-2 &  1.41e-2 &  8.62e-3 &  4.71e-3  &  0.69\\
     $W$   & 1.44e-1 &  1.05e-1 &  7.57e-2 &  5.42e-2 &  3.86e-2  &  0.47\\
\hline
      $L^1$ & 3.11e-2 &  2.18e-2 &  1.44e-2 &  9.42e-3 &  5.56e-3  &  0.62\\
$0.8$ $L^2$ & 4.43e-2 &  3.32e-2 &  2.38e-2 &  1.59e-2 &  9.52e-3  &  0.55\\
     $W$   & 1.44e-1 &  1.05e-1 &  7.58e-2 &  5.42e-2 &  3.87e-2  &  0.47\\
\hline
      $L^1$ & 3.37e-2 &  2.58e-2 &  1.83e-2 &  1.19e-2 &  6.81e-3  &  0.57\\
$1.0$ $L^2$ & 4.45e-2 &  3.66e-2 &  2.86e-2 &  2.07e-2 &  1.32e-2  &  0.43\\
     $W$   & 1.44e-1 &  1.05e-1 &  7.58e-2 &  5.43e-2 &  3.87e-2  &  0.47\\
\hline
\end{tabular}
\end{table}

\begin{table}[hbt!]
\centering
\caption{Numerical results for the forcing $\Psi(x)=x^2/2$.\label{tab:1d-x2}}
\begin{tabular}{r|cccccc}
\hline
$\alpha\backslash N$& 20 & 40 & 80 & 160 & 320 & rate \\
\hline
      $L^1$ & 1.41e-2  & 8.74e-3  & 5.06e-3  & 2.98e-3  & 1.61e-3  & 0.78\\
$0.6$ $L^2$ & 1.93e-2  & 1.30e-2  & 8.46e-3  & 5.14e-3  & 2.79e-3  & 0.69\\
  $W$   & 1.45e-1  & 1.05e-1  & 7.60e-2  & 5.44e-2  & 3.88e-2  & 0.47\\
\hline
      $L^1$ & 2.18e-2  & 1.51e-2  & 9.81e-3  & 5.85e-3  & 3.28e-3  & 0.68\\
$0.8$ $L^2$ & 2.88e-2  & 2.17e-2  & 1.56e-2  & 1.04e-2  & 6.22e-3  & 0.55\\
    $W$   & 1.45e-1  & 1.05e-1  & 7.60e-2  & 5.44e-2  & 3.87e-2  & 0.47\\
\hline
      $L^1$ & 3.04e-2  & 2.34e-2  & 1.66e-2  & 1.07e-2  & 6.09e-3  & 0.58\\
$1.0$ $L^2$ & 3.81e-2  & 3.15e-2  & 2.46e-2  & 1.78e-2  & 1.14e-2  & 0.43\\
     $W$   & 1.45e-1  & 1.05e-1  & 7.60e-2  & 5.44e-2  & 3.88e-2  & 0.47\\
\hline
\end{tabular}
\end{table}

\begin{figure}[hbt!]
  \centering
  \begin{tabular}{cc}
  \includegraphics[width=0.40\textwidth]{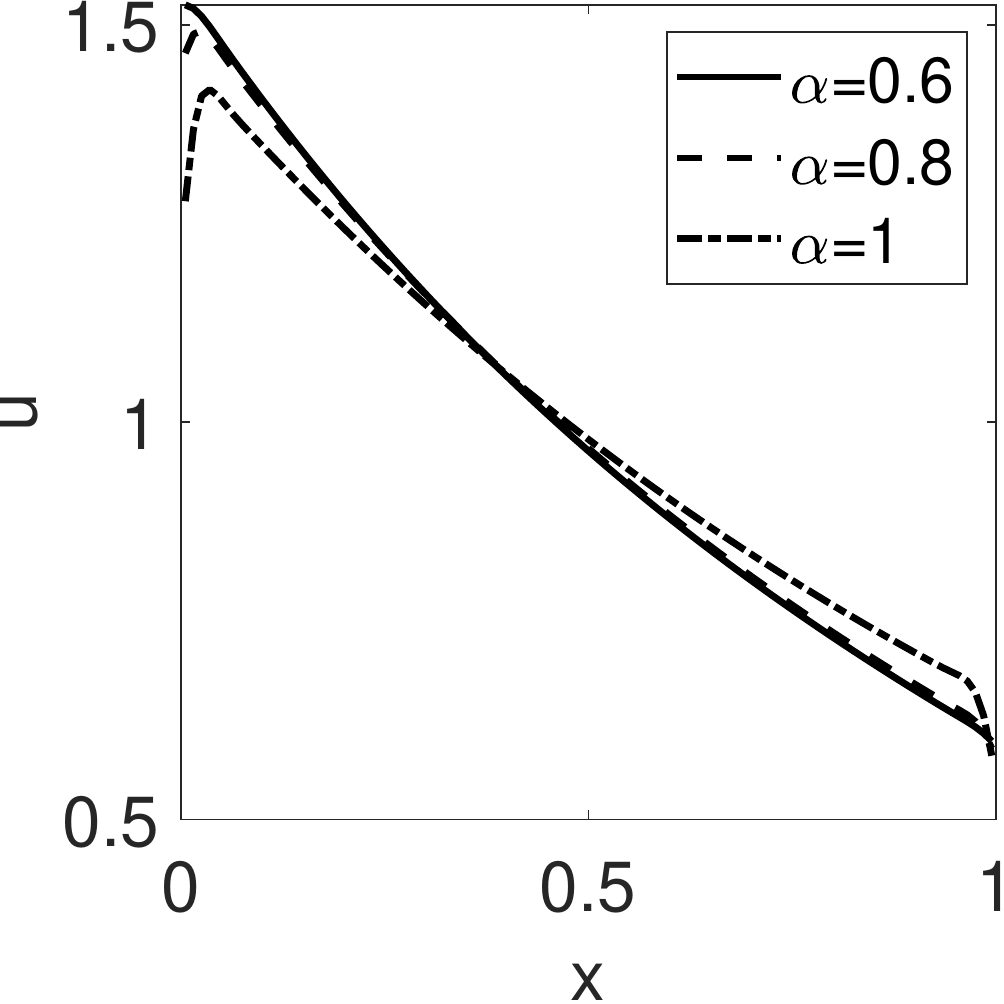} & \includegraphics[width=0.40\textwidth]{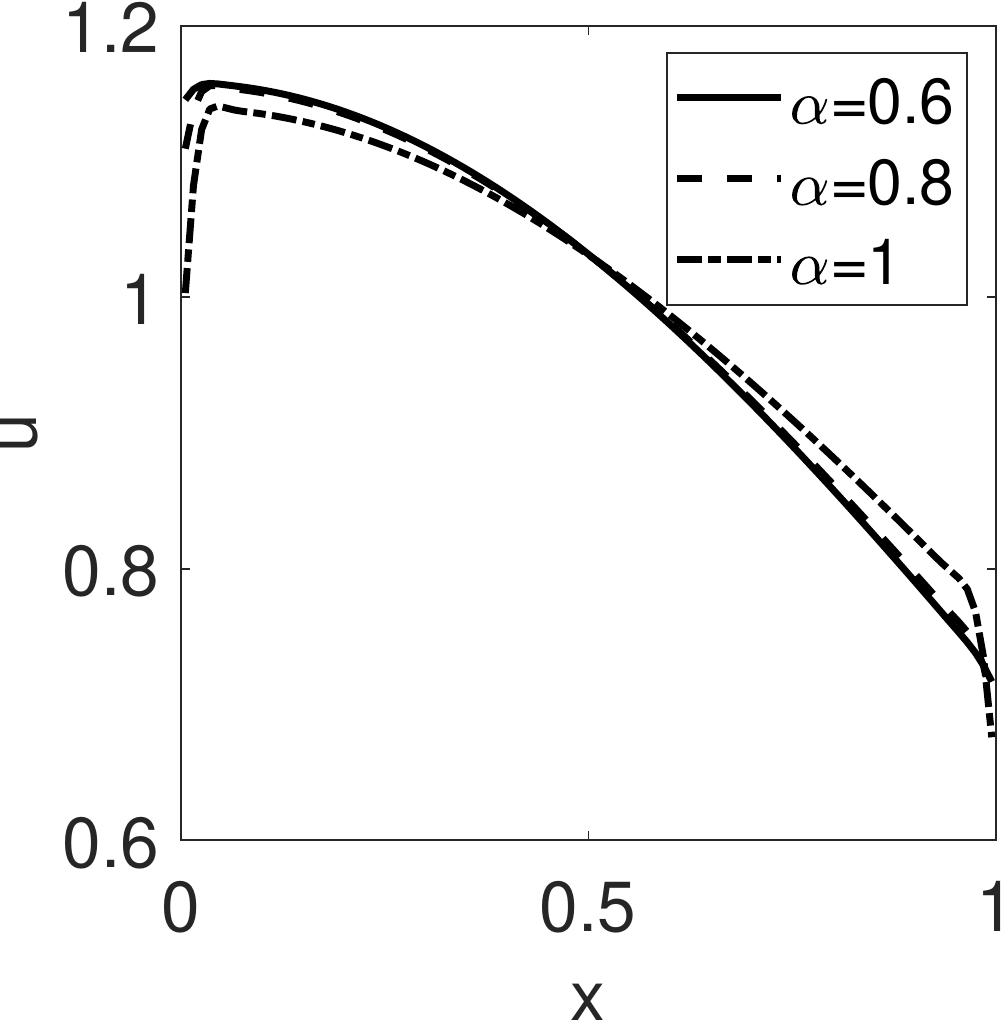}\\
   $\Psi(x) = x$  & $\Psi(x) =x^2/2$
  \end{tabular}
  \caption{The probability density function at $t=1$.\label{fig:density-t=1}}
\end{figure}

The next example is concerned with a two-dimensional problem.
\begin{example}\label{exam:2d}
The domain $\Omega=(0,1)^2$, the initial data $u_0(x)=1$ in $\Omega$, and the forcing $\Psi$ is given by $\Psi(x_1,x_2)=x_1+x_2$.
\end{example}

The numerical results for Example \ref{exam:2d} are presented in Table \ref{tab:2d} and Fig.
\ref{fig:density-2d-t=1}. The empirical convergence rates are similar to the one-dimensional
case, and the convergence is also very steady (but the computing time is much higher). The
density profiles for the three fractional orders are largely comparable at $T=1$, with the
main differences lie at the boundary, as observed in the one-dimensional case, cf. Fig. \ref{fig:density-t=1}. This is possibly
due to the difference in the long-time behavior for different fractional orders.
\begin{table}[hbt!]
\centering
\caption{Numerical results for the forcing $\Psi(x)=x_1+x_2$.\label{tab:2d}}
\begin{tabular}{r|cccccc}
\hline
$\alpha\backslash N$& 20 & 40 & 80 & 160 & 320 & rate \\
\hline
      $L^1$ & 3.28e-2  & 2.15e-2  & 1.35e-2  & 8.14e-3  & 4.43e-3  &  0.72\\
$0.6$ $L^2$ & 4.81e-2  & 3.27e-2  & 2.13e-2  & 1.31e-2  & 7.24e-3  &  0.68\\
     $W$   & 2.04e-1  & 1.48e-1  & 1.07e-1  & 7.66e-2  & 5.47e-2  &  0.47\\
\hline
      $L^1$ & 4.57e-2  & 3.31e-2  & 2.27e-2  & 1.46e-2  & 8.58e-3  &  0.60\\
$0.8$ $L^2$ & 6.63e-2  & 5.01e-2  & 3.62e-2  & 2.46e-2  & 1.49e-2  &  0.53\\
   $W$   & 2.04e-1  & 1.48e-1  & 1.07e-1  & 7.67e-2  & 5.47e-2  &  0.47\\
\hline
      $L^1$ & 4.93e-2  & 3.96e-2  & 2.92e-2  & 1.94e-2  & 1.09e-2  &  0.54\\
$1.0$ $L^2$ & 6.57e-2  & 5.43e-2  & 4.27e-2  & 3.13e-2  & 2.05e-2  &  0.42\\
      $W$   & 2.04e-1  & 1.49e-1  & 1.07e-1  & 7.68e-2  & 5.48e-2  &  0.47\\
\hline
\end{tabular}
\end{table}

\begin{figure}
  \centering
  \begin{tabular}{cccccc}
  \includegraphics[width=0.3\textwidth]{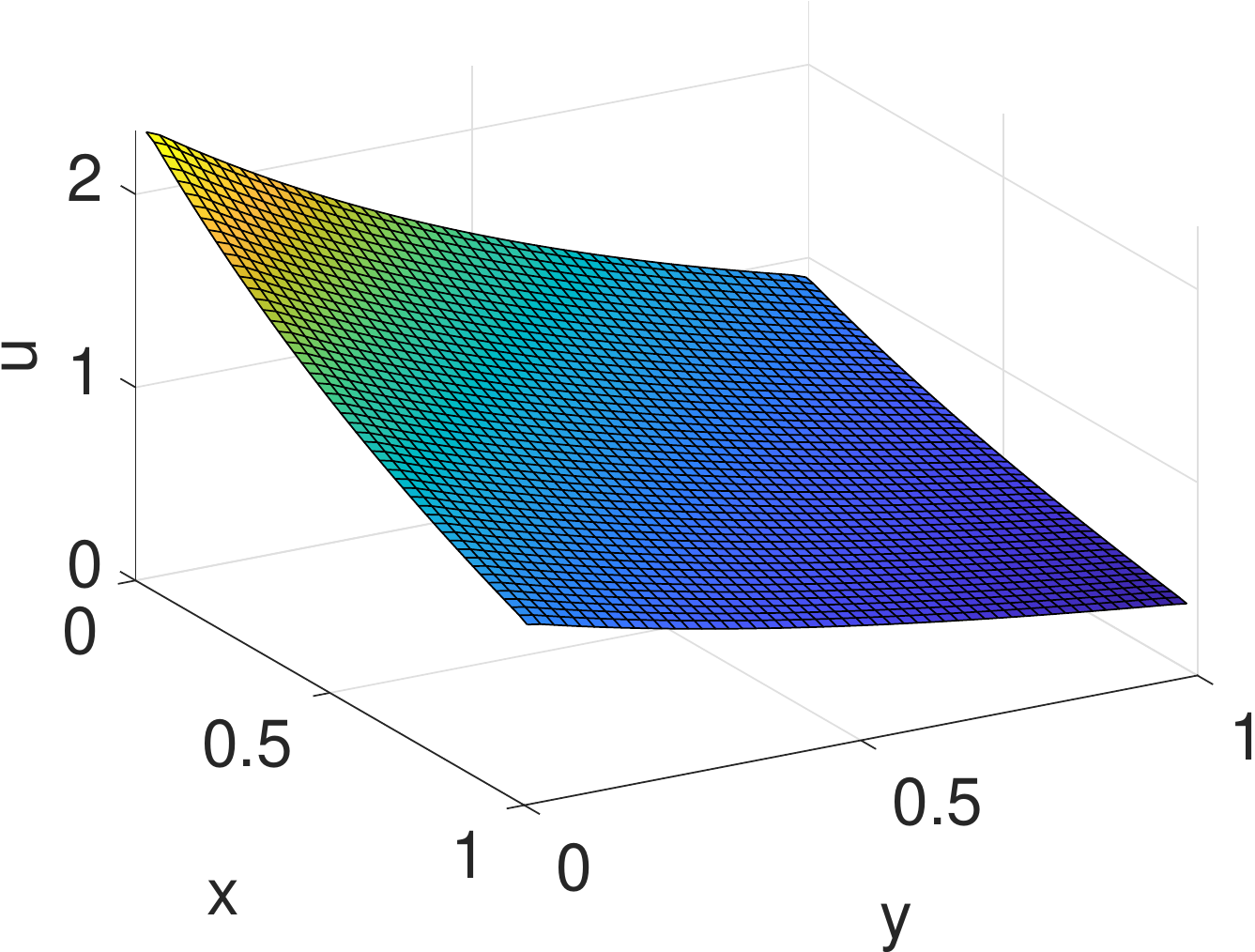} & \includegraphics[width=0.3\textwidth]{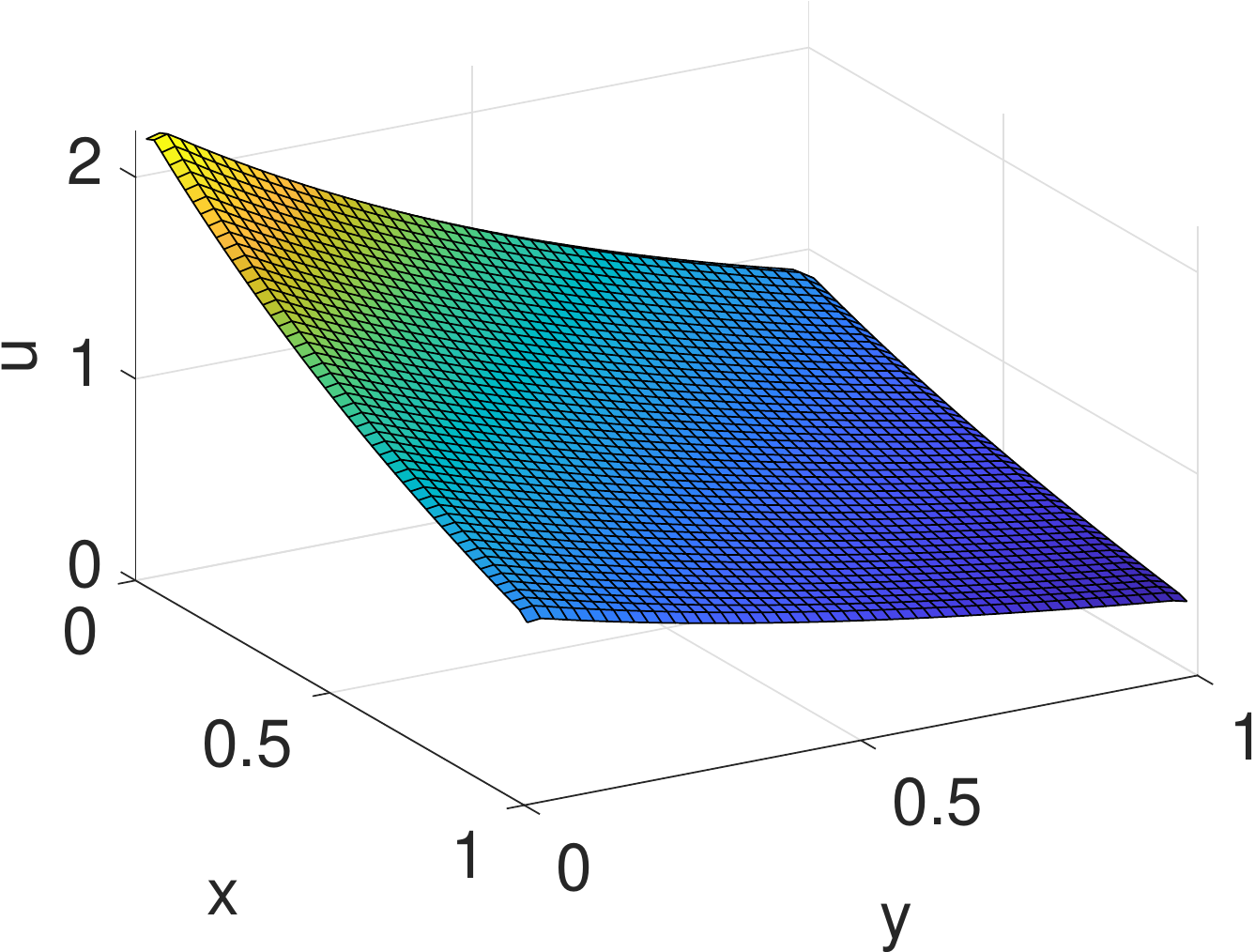} & \includegraphics[width=0.3\textwidth]{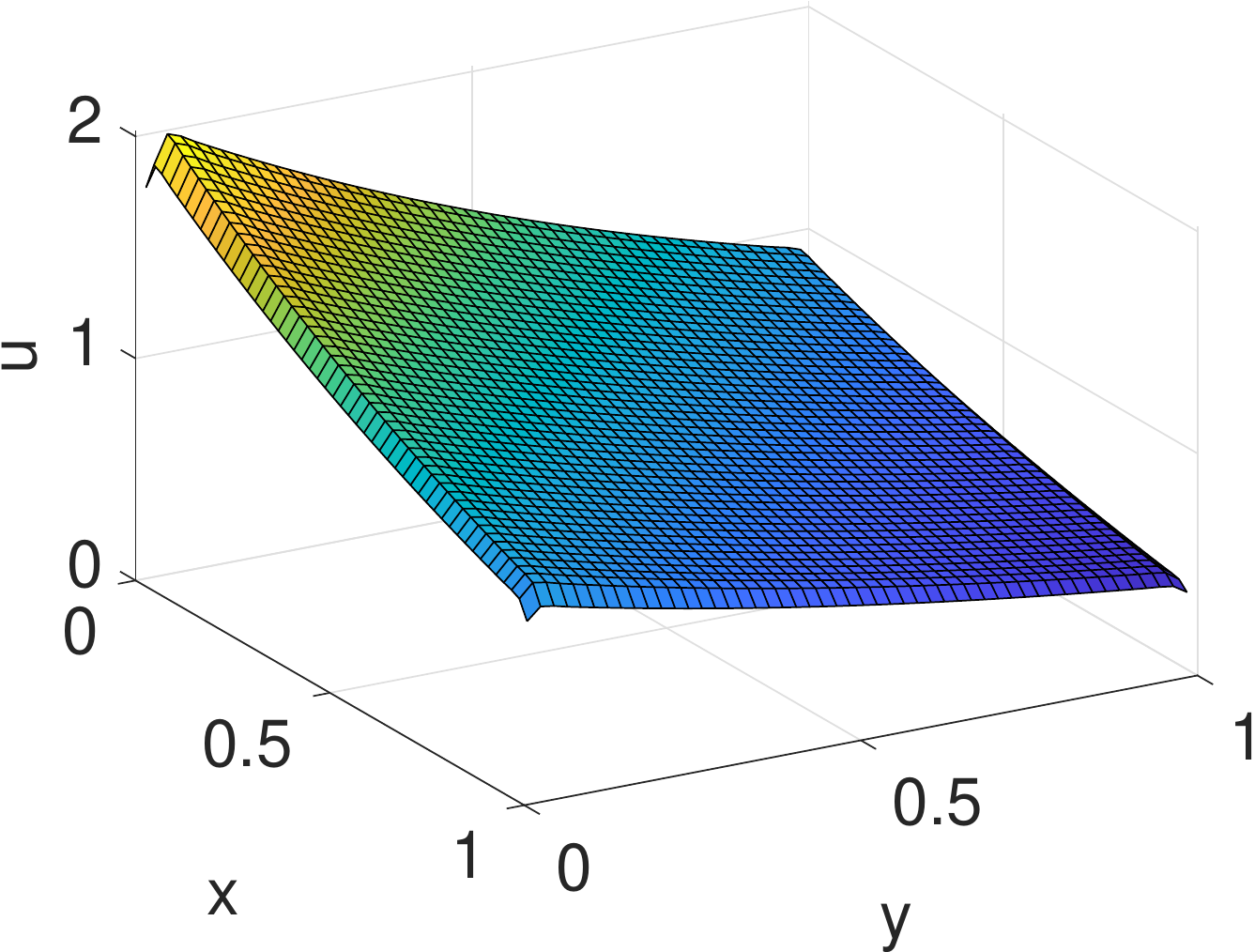}\\
  $\alpha=0.6$ &  $\alpha=0.8$ & $\alpha=1.0$
  \end{tabular}
  \caption{The probability density function for Example \ref{exam:2d} at $t=1$.\label{fig:density-2d-t=1}}
\end{figure}

\appendix
\section{Dykstra algorithm}\label{app:JKO-stepping}

In this appendix, we describe the Dykstra algorithm for JKO stepping, originally developed in \cite{Peyre:2015}.
Let $\mathbf{C}=[c_{ij}]\in\mathbb{R}^{M\times M}$ be the cost, with $c_{ij}=|x_i-x_j|^2$. The entropic regularization of
Wasserstein distance between two discrete probability measures $\mathbf{p},\mathbf{q}\in\Sigma_M$ for a cost $\mathbf{C}\in\mathbb{R}^{M\times M}$ is given by
\begin{equation*}
  W_{2,\gamma}(\mathbf{p},\mathbf{q})^2 = \min_{\boldsymbol{\pi} \in \mathcal{P}(\mathbf{p},\mathbf{q})} \langle \mathbf{C},\boldsymbol{\pi}\rangle + \gamma \langle
  \boldsymbol{\pi}, \log\boldsymbol{\pi}-\mathbf{1}\rangle + \langle \mathbf{1}, \ell_{\mathbb{R}_+^{M\times M}}(\boldsymbol{\pi})\rangle,
\end{equation*}
where $\gamma>0$ is a small number, controlling the tradeoff between accuracy and computational efficiency, and $\mathcal{P}(\mathbf{p},\mathbf{q})$ is
the set of couplings between $\mathbf{p}$ and $\mathbf{q}$, i.e., $\mathcal{P}(\mathbf{p},\mathbf{q})=\{\boldsymbol{\pi}\in\mathbb{R}^{M\times M}_+: \boldsymbol{\pi}\mathbf{1}= \mathbf{p}, \boldsymbol{\pi}^T\mathbf{1}=\mathbf{q}\}$,
with $\mathbf{1}$ being a vector or a matrix with all entries equal to unit. Accordingly, the entropic regularization of the fractional
JKO functional is given by
\begin{equation}\label{eqn:JKO-reg}
    \langle \mathbf{C}, \boldsymbol{\pi}\rangle + \gamma \langle \boldsymbol{\pi},\log \boldsymbol{\pi}-\boldsymbol{1}\rangle  + \langle\mathbf{1}, \ell_{\mathbb{R}_+^{M\times M}}(\boldsymbol{\pi})\rangle + \tau' f(\boldsymbol{\pi}\mathbf{1}) + \ell_{\mathcal{C}_q}(\boldsymbol{\pi}),
\end{equation}
where $\ell_C$ is an indicator function, $\tau'=\frac{2\tau^\alpha}{C_\alpha}$, $\mathcal{C}_q = \{\boldsymbol{\pi}\in
\mathbb{R}^{M\times M}: \boldsymbol{\pi}^T \mathbf{1} = q\}$ and $f(\mathbf{q})=\langle \mathbf{q}, \log\mathbf{q}-\mathbf{1}
+\boldsymbol{\psi}\rangle$. This functional can be recast into
\begin{equation*}
  \min_{\boldsymbol{\pi}} {\rm KL}(\boldsymbol{\pi}|\boldsymbol{\xi}) + \varphi_1(\boldsymbol{\pi}) + \varphi_2(\boldsymbol{\pi}),
\end{equation*}
with $\mathrm{KL}$ being the classical KL divergence, and
\begin{equation*}
  \varphi_1(\boldsymbol{\pi}) = \ell_{\mathcal{C}_q}(\boldsymbol{\pi}) \quad\mbox{and} \quad \varphi_2(\boldsymbol{\pi}) = \tfrac{\tau'}{\gamma} f(\boldsymbol{\pi}\mathbf{1}),
\end{equation*}
and the Gibbs kernel $\boldsymbol{\xi}$ is given by
\begin{equation*}
   \boldsymbol{\xi}= e^{-\mathbf{C}/\gamma} \in\mathbb{R}^{M\times M}_{+,*}.
\end{equation*}
The update is obtained using $\mathbf{p}=\boldsymbol{\pi}\mathbf{1}$. It remains to minimize \eqref{eqn:JKO-reg} with respect to the
coupling $\boldsymbol{\pi}\in\mathcal{P}(\mathbf{p},\mathbf{q})$. This can be carried out using the Dykstra algorithm developed in
\cite{Peyre:2015}; see Algorithm \ref{alg:Dykstra} for the complete procedure, where the notation $\circ$ denotes componentwise
product between two vectors. It is noteworthy that the algorithm operates only on
vectors $\mathbf{a},\mathbf{b},\mathbf{u},\mathbf{v}$ instead of the coupling $\boldsymbol{\xi}$ directly, due to the fact that
the optimal coupling satisfies $\boldsymbol{\pi}=\mathrm{diag}(\mathbf{a})\boldsymbol{\xi}\mathrm{diag}(\mathbf{b})$, for some
$\mathbf{a},\mathbf{b}\in \mathbb{R}_+^M$, like the classical entropic regularization of optimal transport \cite{Cuturi:2013}.

\begin{algorithm}
  \caption{Dykstra algorithm for JKO stepping.\label{alg:Dykstra}}
  \begin{algorithmic}[1]
    \STATE Set $\mathbf{a}^{0}=\mathbf{b}^{0}=\mathbf{u}^{0}=\mathbf{v}^{0}=\mathbf{1}$, and specify the tolerance $\epsilon$.
    \FOR{$\ell=1,\ldots,L$}
      \IF{$\ell$ is odd}
        \STATE update $\mathbf{a}^{\ell}$ and $\mathbf{b}^{\ell}$ by
         \begin{equation}
           \mathbf{a}^{\ell} = \mathbf{a}^{\ell-1}\circ \mathbf{u}^{\ell-2}\quad \mbox{and}\quad \mathbf{b}^{\ell} = \frac{\mathbf{q}}{\boldsymbol{\xi}^T(\mathbf{a}^{\ell})};
         \end{equation}
      \ELSE
        \STATE update $\mathbf{a}^{\ell}$ and $\mathbf{b}^{\ell}$ by
        \begin{equation}
          \mathbf{b}^{\ell}=\mathbf{b}^{\ell-1}\circ \mathbf{v}^{\ell-2} \quad \mbox{and}\quad \mathbf{a}^{\ell} = \frac{\mathbf{p}^{\ell}}{\boldsymbol{\xi}(\mathbf{b}^{\ell})},
        \end{equation}
        where $\mathbf{p}^{\ell}$ is given by
        \begin{equation}\label{eqn:KL-prox}
         \mathbf{p}^{\ell} = \mathrm{Prox}_{\frac\tau\gamma f}^{\rm KL}(\mathbf{a}^{\ell-1}\circ \mathbf{u}^{\ell-2}\circ \boldsymbol{\xi}(\mathbf{b}^{\ell}));
        \end{equation}
      \ENDIF
      \STATE update $\mathbf{u}^{\ell}$ and $\mathbf{v}^{\ell}$ by
      \begin{equation*}
        \mathbf{u}^{\ell}=\mathbf{u}^{\ell-2}\circ \frac{\mathbf{a}^{\ell-1}}{\mathbf{a}^{\ell}} \quad \mbox{and} \quad \mathbf{v}^{\ell} = \mathbf{v}^{\ell-2}\circ\frac{\mathbf{b}^{\ell-1}}{\mathbf{b}^{\ell}};
      \end{equation*}
      \STATE if $\|\mathbf{b}^\ell\circ \boldsymbol{\xi}^T(\mathbf{a}^{\ell})-\mathbf{q}\|<\epsilon$ and $\ell$ is even, terminate the iteration;
    \ENDFOR
    \STATE Output $\mathbf{p}^{\ell} $ defined in \eqref{eqn:KL-prox}.
  \end{algorithmic}
\end{algorithm}

The (Kullback-Leibler) KL proximal operator $ \mathrm{Prox}_{\sigma f}^{\rm KL}(\mathbf{q})$ in \eqref{eqn:KL-prox} for any $\mathbf{q}\in\mathbb{R}^M_+$ is defined by
\begin{equation*}
  {\rm Prox}_{\sigma f}^{\rm KL}(\mathbf{q}) = \arg\min _{\mathbf{p}\in\mathbb{R}^M_+} \langle \mathbf{p}, \log \tfrac{\mathbf{p}}{\mathbf{q}}- \mathbf{1} \rangle  + \sigma \langle  \mathbf{p}, \log\mathbf{p}-\mathbf{1}+\boldsymbol{\psi}\rangle.
\end{equation*}
Due to the separability of the optimization problem, it suffices to minimize the one-dimensional function
$g(s) =  s \log \frac{s}{t} - s + t  + \sigma (s \log s - s + s \psi).$ Differentiating with respect to
$s$ and setting it to zero gives $\log s - \log t + \sigma(\log s + \psi) =0$, i.e., $\log s = \frac{1}{1+\sigma}
\log t - \frac{\sigma \psi}{1+\sigma}$, and $s^*= t^\frac{1}{1+\sigma}e^{-\frac{\sigma}{1+\sigma}\psi}$.
Thus the proximal operator is given by
\begin{equation*}
  {\rm Prox}_{\sigma f}^{\rm KL}(\mathbf{q}) = \mathbf{q}^\frac{1}{1+\sigma}\circ e^{-\frac{\sigma}{1+\sigma}\boldsymbol{\psi}}.
\end{equation*}
The stopping criterion at line 9 employs the violation of the constraint $\mathcal{C}_q$.

\bibliographystyle{abbrv}

\end{document}